\newtheorem{theorem}{Theorem}
\title{Flexible mean field variational inference using mixtures of non-overlapping exponential families}
\author{%
    Jeffrey P. Spence \\
  Stanford University\\
  Stanford, CA 94305 \\
  \texttt{jspence@stanford.edu} \\
}
\begin{document}

\maketitle

\

\begin{abstract} 
Sparse models are desirable for many applications across diverse domains as they can perform automatic variable selection, aid interpretability, and provide regularization.  When fitting sparse models in a Bayesian framework, however, analytically obtaining a posterior distribution over the parameters of interest is intractable for all but the simplest cases.  As a result practitioners must rely on either sampling algorithms such as Markov chain Monte Carlo or variational methods to obtain an approximate posterior.  Mean field variational inference is a particularly simple and popular framework that is often amenable to analytically deriving closed-form parameter updates.  When all distributions in the model are members of exponential families and are conditionally conjugate, optimization schemes can often be derived by hand.  Yet, I show that using standard mean field variational inference can fail to produce sensible results for models with sparsity-inducing priors, such as the spike-and-slab.  Fortunately, such pathological behavior can be remedied as I show that mixtures of exponential family distributions with non-overlapping support form an exponential family.  In particular, any mixture of a diffuse exponential family and a point mass at zero to model sparsity forms an exponential family.  Furthermore, specific choices of these distributions maintain conditional conjugacy.  I use two applications to motivate these results: one from statistical genetics that has connections to generalized least squares with a spike-and-slab prior on the regression coefficients; and sparse probabilistic principal component analysis.  The theoretical results presented here are broadly applicable beyond these two examples.
\end{abstract}

\section{Introduction}
Bayesian graphical models \cite{lauritzen1988local} are widely used across vast swaths of engineering and science.  Graphical models succinctly summarize modeling assumptions, and their posterior distributions naturally quantify uncertainty and produce optimal point estimators to minimize downstream risk.  Efficient, exact posterior inference algorithms exist for special cases such as the sum-product algorithm \cite{pearl1982reverend} for discrete random variables in models with low tree-width \cite{zhao2015relationship} or particular Gaussian graphical models \cite{kalman1960new}.  Outside of these special cases, however, alternative techniques must be used to approximate the posterior.  Markov chain Monte Carlo (MCMC), especially Gibbs sampling, can sample from the posterior asymptotically \cite{geman1984stochastic,hastings1970monte} but such techniques are difficult to scale to models with many observations, can have issues with mixing \cite{mossel2005phylogenetic}, and their convergence can be hard to assess \cite{vehtari2019rank}.

Variational inference (VI) \cite{jordan1999introduction} avoids sampling and instead fits an approximate posterior via optimizing a loss function that acts as a proxy for some measure of divergence between the approximate and true posteriors.  Usually this divergence is the ``reverse'' Kullback-Leibler (KL) divergence between the approximate posterior $Q$, and the true posterior $P$: $\text{KL}(Q || P)$.  Minimizing this reverse KL is equivalent to maximizing the so-called \textbf{e}vidence \textbf{l}ower \textbf{bo}und (ELBo) which depends only on the prior, the likelihood, and the approximate posterior.  Importantly, the unknown true posterior does not appear in the ELBo.  In many models, by choosing a particular space of approximate posteriors over which to optimize (the ``variational family''), maximizing the ELBo is a tractable albeit non-convex optimization problem.  In practice, such optimization problems are solved using coordinate ascent, gradient ascent, or natural gradient ascent \cite{amari1998natural}.  These optimization methods and stochastic variants thereof can scale to massive models and datasets \cite{hoffman2013stochastic}.

This formulation introduces a key tension in VI.  On one hand, simpler variational families can be more computationally tractable to optimize. On the other hand, a large approximation error will be incurred if the true posterior does not lie within the variational family.  VI is particularly tractable for fully conjugate models with all distributions belonging to exponential families, with a variational family that assumes all variables are independent.  This approach, called \emph{mean field VI}, has a number of desirable properties including computationally efficient, often analytic coordinate-wise updates that are equivalent to natural gradient steps \cite{blei2017variational}.  Many useful models--including mixture models--are not fully conjugate, but can be made conjugate by introducing additional auxiliary variables.  Unfortunately, introducing more variables and enforcing their independence in the variational family results in larger approximation gaps \cite{teh2007collapsed}.

Many approaches have been developed to extend VI beyond the mean field approximation.  Specific models, such as coupled hidden Markov models, are amenable to more expressive variational families because subsets of the variables form tractable models -- an approach called structured VI \cite{saul1996exploiting}.   For non-conjugate models or models with variational families that are otherwise not amenable to mean field VI, it may be possible to compute gradients numerically \cite{knowles2011non} or obtain an unbiased noisy estimate of the gradient via the log gradient trick (sometimes called the REINFORCE estimator) \cite{rezende2014stochastic} or in certain special cases the reparameterization trick \cite{kingma2014auto}.  Because such approaches are broadly applicable they have come to be part of the toolkit of ``black box VI'' \cite{ranganath2014black}, which seeks to automatically derive gradient estimators for very general models and variational families.  Stochastic gradient-based optimization has also been combined with structured VI \cite{hoffman2015structured}.  Additionally, approximations to gradients tailored to specific distributions have been developed \cite{jankowiak2018pathwise}.  Much recent work has been devoted to combining deep learning with Bayesian graphical models, whether purely for speeding up inference, or by using neural networks as flexible models \cite{bittner2019approximating,johnson2016composing, zhang2018advances}.  Overall, these general approaches are impressive in scope, but the gradient estimators can be noisy and hence optimizing the ELBo can require small step sizes and corresponding long compute times \cite{roeder2017sticking}.

Here, I take an alternative approach, expanding the utility of mean field VI by showing that more flexible variational families can be constructed by combining component exponential families while maintaining desirable conjugacy properties and still forming an exponential family.  This construction is particularly useful in models involving sparsity.  The approach I present maintains the analytic simplicity of mean field VI but avoids the need to introduce auxiliary variables to obtain conjugacy.

I begin with a motivating example in Section~\ref{sec:motivation}, develop the main theoretical results in Section~\ref{sec:theory}, and show their utility on two examples in Section~\ref{sec:numerical}.  Implementation details, proofs, and additional theoretical results are presented in the Appendix.

\section{A Motivating Example}
\label{sec:motivation}
To better understand the genetics of disease and other complex traits, it is now routine to collect genetic information and measure a trait of interest in many individuals.  Initially, such studies hoped to find a small number of genomic locations associated with the trait to learn about the underlying biology or to find suitable drug targets.  As more of these genome-wide association studies (GWAS) were performed, however, it became increasingly clear that enormous numbers of locations throughout the genome contribute to most traits, including many diseases \cite{boyle2017expanded}.  While this makes it difficult to prioritize a small number of candidate genes for biological followup, one can still use GWAS data to predict a trait from an individual's genotypes, an approach referred to as a polygenic score (PGS) \cite{international2009common}.  For many diseases, PGSs are sufficiently accurate to be clinically relevant for stratifying patients by disease risk \cite{khera2018genome}.

Typically PGSs are constructed by estimating effect sizes for each position in the genome and assuming that positions contribute to the trait additively.  This assumption is justified by evolutionary arguments \cite{sella2019thinking} and practical concerns about overfitting due to the small number of individuals in GWAS (tens to hundreds of thousands) relative to the number of genotyped positions (millions).  More concretely, the predicted value of the trait $\hat{Y_i}$ in individual $i$ is
\[
\hat{Y_i} := \sum_{j=1}^P G_{ij} \beta_j
\]
where $G_{ij}$ is the genotype of individual $i$ at position $j$ and $\beta_j$ is the effect size at position $j$.

Estimating the effect sizes, $\beta_j$, is complicated by the fact that to protect participant privacy GWAS typically only release marginal effect size estimates $\hat{\beta_j}$ obtained by separately regressing the value of the trait against the genotypes at each position.  These marginal estimates thus ignore the correlation in individuals' genotypes at different positions in the genome. Fortunately, these correlations are stable across sets of individuals from the same population and can be estimated from publicly available genotype data (e.g., \cite{thousand2015global}) even if that data does not contain measurements of the trait of interest.

An early approach to dealing with genotypic correlations was LDpred \cite{vilhjalmsson2015modeling} which uses MCMC to fit the following model with a spike-and-slab prior on the effect sizes:
\begin{align*}
\beta_j &\overset{\text{i.i.d.}}{\sim} p_0 \delta_0 + (1-p_0) \mathcal{N}(0, \sigma_g^2)\\
\hat{\beta} | \beta & \overset{\hphantom{\text{i.i.d.}}}{\sim} \mathcal{N}(\mathbf{X}\beta, \sigma^2_e \mathbf{X}),
\end{align*}
where $\mathbf{X} \in \mathbb{R}^{P\times P}$ is the matrix of correlations between genotypes at different positions in the genome, $\sigma^2_e$ is the variance of the estimated effect sizes from the GWAS and $\sigma^2_g$ is the prior variance of the non-zero effect sizes.

While this model was motivated by PGSs, it is also a special case of Bayesian generalized least squares with a spike-and-slab prior on the regression coefficients.

Note that if $\mathbf{X}$ is diagonal, then the likelihood is separable and the true posterior can, in fact, be determined analytically.  Unfortunately, in real data genotypes are highly correlated (tightly linked) across positions and so $\mathbf{X}$ is far from diagonal and as a result integrating over the mixture components for each $\beta_j$ must be done simultaneously, resulting in a runtime that is exponential in the number of genotyped positions.  To obtain a computationally tractable approximation to the true posterior, LDpred and a number of extensions that incorporate more flexible priors (e.g., \cite{ge2019polygenic,lloyd2019improved}) use MCMC.  As noted above, however, MCMC has a number of drawbacks and so there may be advantages to deriving and implementing a VI scheme.

A natural mean field VI approach to approximating the posterior would be to split the mixture distribution using an auxiliary random variable as follows
\begin{align}
Z_j &\overset{\text{i.i.d.}}{\sim} \text{Bernoulli}(1 - p_0) \nonumber \\
\beta_j | Z_j &\overset{\hphantom{\text{i.i.d.}}}{\sim} \mathcal{N}(0, \sigma^2_{Z_j}) \label{eq:ldpred_naive}\\
\hat{\beta} | \beta & \overset{\hphantom{\text{i.i.d.}}}{\sim} \mathcal{N}(\mathbf{X}\beta, \sigma^2_e \mathbf{X}),\nonumber 
\end{align}
with $\sigma^2_0 = 0$ and $\sigma^2_1 = \sigma^2_g$ (treating a Gaussian with zero variance as a point mass for notational convenience).  One can then use categorical variational distributions $q_{Z_j}$ for the $Z$s and Gaussian variational distributions $q_{\beta_j}$ for the $\beta$s.  Unfortunately, this approach immediately encounters an issue: when calculating the ELBo, $\text{KL}(q_{\beta_j} || \mathcal{N}(0, 0))$ is undefined unless $q_{\beta_j}$ is a point mass at zero because otherwise $q_{\beta_j}$ is not absolutely continuous with respect to the point mass at zero.  This may seem to be merely a technical issue that vanishes if $\sigma^2_0$ is taken to be some tiny value so that $q_{\beta_j}$ is absolutely continuous with respect to $\mathcal{N}(0, \sigma^2_0)$ and hence has well-defined KL, while for practical purposes $\mathcal{N}(0, \sigma^2_0)$ acts like a point mass at zero.  Yet, this superficial fix is not enough: the mean field assumption requires $Z_j$ and $\beta_j$ to be independent under the variational approximation to the posterior, which cannot capture the phenomenon that $Z_j = 0$ forces $\beta_j$ to be close to zero while $Z_j =1$ allows $\beta_j$ to be far from zero.  Further intuition is presented in Appendix~\ref{sec:analysis}, where I analyze the case with only a single position (i.e., $P=1$) in detail.

Fortunately, this problem can be solved by noting that spike-and-slab distributions like $p_0 \delta_0 + (1-p_0) \mathcal{N}(0, \sigma_g^2)$ surprisingly still form an exponential family and are conjugate to the likelihoo.  In this example, by using a spike-and-slab distribution for the variational distributions $q_{\beta_j}$, there is no need to use auxiliary variables to obtain analytical updates for fitting the approximate posterior.  A similar approach has been considered for a specific model in a different context \cite{carbonetto2012scalable}, but in the next section, I show why the analysis works for this approach and also show that it is more broadly applicable.  In Section~\ref{sec:numerical}, I return to this motivating example to show that the naive mean field VI does indeed break down and that by using sparse conjugate priors an accurate approximation to the posterior may be obtained.

\section{More flexible exponential families}
\label{sec:theory}

In this section I begin with a simple observation.  While it is usually true that mixtures of distributions from an exponential family no longer form an exponential family, that is not the case for mixtures of distributions with distinct support.  Mixtures of exponential family members with non-overlapping support always form an exponential family.

\begin{theorem}
\label{theorem:exp_family}
Let $F_1, \ldots, F_K$ be exponential families of distributions, where the distributions within each family have supports $\mathcal{S}_1,\ldots,\mathcal{S}_K$ such that $\mathcal{S}_i \cap \mathcal{S}_j = \varnothing$ for all $i \ne j$.  Further, let $\eta_1, \ldots, \eta_K$ be the natural parameters of the exponential families, $T_1, \ldots, T_K$ be the sufficient statistics, $A_1,\ldots,A_K$ be the log-partitions, and $H_1,\ldots,H_K$ be the base measures.  Then the family of mixture distributions
\begin{align*}
F_{\text{mix}} = \left\{\sum_{k=1}^K \pi_k f_k :  \sum_{k=1}^K \pi_k = 1, \pi_i \ge 0,  f_i \in F_i, \forall i \right\}
\end{align*}
is an exponential family with natural parameters
\begin{align*}
\eta_\text{mix} &= \big(\eta_1, \ldots, \eta_K, \log\pi_1 - A_1(\eta_1) - \log\pi_K + A_K(\eta_K), \ldots,\\
&\hspace{3cm} \log\pi_{K-1} - A_{K-1}(\eta_{k-1}) - \log\pi_K + A_K(\eta_K)\big),
\end{align*}
corresponding sufficient statistics
\[
T_\text{mix}(x) = \left(\mathbb{I}\left\{x\in\mathcal{S}_1\right\}T_1(x),\ldots,\mathbb{I}\left\{x\in\mathcal{S}_K\right\}T_K(x),\mathbb{I}\left\{x\in\mathcal{S}_1\right\},\ldots, \mathbb{I}\left\{x\in\mathcal{S}_{K-1}\right\}\right),
\]
log-partition
\[
A_\text{mix}(\eta_\text{mix}) = A_K(\eta_K) - \log \pi_K,
\]
and base-measure
\[
\frac{dH_\text{mix}}{dH}(x) = \prod_{i=1}^K \left(\frac{dH_i}{dH}(x)\right)^{\mathbb{I}\left\{x \in \mathcal{S}_i\right\}}
\]
where $H$ is a measure such that $H_i$ is absolutely continuous with respect to $H$ for all $i$ and $dH_i/dH$ is the usual Radon-Nikodym derivative and $0^0$ is taken to be $1$.
\end{theorem}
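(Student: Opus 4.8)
The plan is to verify directly that every member of $F_{\text{mix}}$ can be written in the standard exponential-family form
\[
f(x) = \exp\!\big(\langle \eta_{\text{mix}}, T_{\text{mix}}(x)\rangle - A_{\text{mix}}(\eta_{\text{mix}})\big)\,\frac{dH_{\text{mix}}}{dH}(x),
\]
with the stated $\eta_{\text{mix}}$, $T_{\text{mix}}$, $A_{\text{mix}}$, and $H_{\text{mix}}$, and conversely that every such choice of natural parameters yields a valid member of $F_{\text{mix}}$. The key structural fact I will exploit throughout is that the supports $\mathcal{S}_1,\dots,\mathcal{S}_K$ are disjoint, so for $H$-almost every $x$ exactly one indicator $\mathbb{I}\{x\in\mathcal{S}_k\}$ is nonzero (I will want to note that the $\mathcal{S}_i$ cover the support of $H_{\text{mix}}$, which holds by construction). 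This means that the ``softmax''-type coupling that normally destroys the exponential-family structure of mixtures does not arise: on each $\mathcal{S}_k$ the mixture density simply equals $\pi_k f_k(x)$.

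The main computation is the forward direction. Fix $x$ and let $k$ be the unique index with $x\in\mathcal{S}_k$. Then the summand picks out $\pi_k f_k(x) = \pi_k \exp(\langle\eta_k,T_k(x)\rangle - A_k(\eta_k))\,\frac{dH_k}{dH}(x)$. I will rewrite $\pi_k$ as $\exp(\log\pi_k)$ and, for $k<K$, absorb the term $\log\pi_k - A_k(\eta_k)$ into the inner product against the indicator coordinate of $T_{\text{mix}}$; for $k=K$ this same term becomes $-A_{\text{mix}}(\eta_{\text{mix}})$ with $A_{\text{mix}} = A_K(\eta_K)-\log\pi_K$. The bookkeeping is that the $K$-th reference level is ``subtracted off'' into the log-partition, and the remaining $K-1$ indicator natural parameters are the differences $\log\pi_i - A_i(\eta_i) - \log\pi_K + A_K(\eta_K)$ exactly as written; one checks that on $\mathcal{S}_i$ ($i<K$) the exponent is $\langle\eta_i,T_i(x)\rangle + (\log\pi_i - A_i(\eta_i) - \log\pi_K + A_K(\eta_K)) - (A_K(\eta_K)-\log\pi_K) = \langle\eta_i,T_i(x)\rangle + \log\pi_i - A_i(\eta_i)$, recovering $\pi_i f_i(x)$, and on $\mathcal{S}_K$ all indicator coordinates vanish and the exponent is $\langle\eta_K,T_K(x)\rangle - A_{\text{mix}} = \langle\eta_K,T_K(x)\rangle - A_K(\eta_K) + \log\pi_K$, again recovering $\pi_K f_K(x)$. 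The base-measure factor $\frac{dH_{\text{mix}}}{dH}(x) = \prod_i (\frac{dH_i}{dH}(x))^{\mathbb{I}\{x\in\mathcal{S}_i\}}$ collapses to $\frac{dH_k}{dH}(x)$ for $x\in\mathcal{S}_k$ by the $0^0=1$ convention, matching what we need. I will also confirm that $T_{\text{mix}}(x)$ never ``mixes'' statistics from different families: on $\mathcal{S}_k$ all the $T_i$ blocks for $i\neq k$ are killed by their indicators, so $\langle\eta_{\text{mix}},T_{\text{mix}}(x)\rangle$ only ever sees $\eta_k$ against $T_k(x)$ plus at most one indicator coordinate.

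For the reverse direction I argue that the map $(\pi_1,\dots,\pi_{K-1},\pi_K,\eta_1,\dots,\eta_K)\mapsto \eta_{\text{mix}}$ is onto the natural parameter space claimed for $F_{\text{mix}}$: the first $K$ blocks are free (they range over the individual families' natural parameter spaces), and given any values $c_1,\dots,c_{K-1}$ for the last $K-1$ coordinates one solves $\log\pi_i = c_i + A_i(\eta_i) + \log\pi_K - A_K(\eta_K)$ together with $\sum_k \pi_k = 1$ for a unique positive $(\pi_1,\dots,\pi_K)$, so the parametrization is consistent and every valid $\eta_{\text{mix}}$ corresponds to a genuine mixture in $F_{\text{mix}}$. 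I expect the only real subtlety — the ``hard part'' — to be the measure-theoretic care: specifying $H$ so that all $H_i \ll H$ simultaneously (e.g. $H = \sum_i H_i$ restricted appropriately, or any dominating $\sigma$-finite measure), checking that $H_{\text{mix}}$ so defined is a genuine measure and that each $f_k$ is indeed a density with respect to it, and handling the $x$ lying outside $\bigcup_i \mathcal{S}_i$ (an $H_{\text{mix}}$-null set, where the base measure vanishes and the formula is vacuous). Everything else is the routine algebraic verification sketched above.
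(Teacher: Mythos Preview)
Your proposal is correct and follows essentially the same route as the paper: both arguments exploit the disjointness of the $\mathcal{S}_i$ to reduce the mixture density on each piece to $\pi_k f_k(x)$ and then match this against the claimed exponential-family form. The paper presents this slightly more compactly by converting the indicator-weighted sum into an indicator-exponent product in one line, whereas you work case-by-case on $x\in\mathcal{S}_k$; you also add the reverse-direction surjectivity check and some measure-theoretic remarks that the paper omits, but the substance is the same.
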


Note that Theorem~\ref{theorem:exp_family} has an apparent asymmetry with respect to component $K$.  This arises because the mixture weights $\pi_1,\ldots,\pi_K$ are constrained to sum to one, and hence $\pi_K$ is completely determined by the other mixture weights.  It would be possible to have a ``symmetric'' version of Theorem~\ref{theorem:exp_family} but the constraint on the mixture weights would mean that the natural parameters would live on a strictly lower dimensional space.  Such exponential families are called \emph{curved exponential families}, and many of the desirable properties of exponential families do not hold for curved exponential families.

While the restriction in Theorem~\ref{theorem:exp_family} to exponential families with non-overlapping support may seem particularly limiting, it is important to note that a new exponential family can be formed by restricting all of the distributions within any exponential family to lie within a fixed subset of their usual domain and renormalizing.  In particular, one could divide the original full domain into non-overlapping subsets and form mixtures of exponential families restricted to these subsets.   Another important set of exponential families are uniform distributions with fixed support.  While each of these exponential families only contains a single distribution, by combining mixtures of these uniform distributions with non-overlapping support, Theorem~\ref{theorem:exp_family} shows that piece-wise constant densities with fixed break points form an exponential family.  In this paper I focus primarily on the case of mixtures of a continuous exponential family member with one or more point masses.  By Theorem~\ref{theorem:exp_family}, mixtures of point masses at fixed locations form an exponential family, and diffuse distributions can be trivially restricted to falling outside of this set of measure zero, which makes clear that spike-and-slab distributions where the slab is a non-atomic distribution from an exponential family always form an exponential family.
 
While these substantially more flexible exponential families may be of independent interest, they are of little use in mean field VI unless they are conjugate to widely used likelihood models.  The following theorem provides a recipe to take a conjugate exponential family model and create a model with a more flexible prior using Theorem~\ref{theorem:exp_family} while maintaining conjugacy.
 
 \begin{theorem}
 \label{theorem:conjugacy}
 Let $F_\text{prior}(x)$ be an exponential family of prior distributions with sufficient statistics $T_\text{prior}(x)$ with the distributions supported on $\mathcal{S}_\text{prior}$ that are conjugate to an exponential family of distributions $F_{Y | X}(y | x)$.  For exponential families $F_1,\ldots,F_K$ with supports $\mathcal{S}_1,\ldots,\mathcal{S}_K$ such that $\mathcal{S}_i \cap \mathcal{S}_j = \varnothing$ for all $i\ne j$, if there exists a matrix $M_i$ and vector $v_i$ such that $T_\text{prior}(x) = M_iT_i(x) + v_i$ for all $x \in\mathcal{S}_i$ for all $i$, then $F_\text{mix}$ as defined in Theorem~\ref{theorem:exp_family} is also conjugate to $F_{Y | X}$.
 \end{theorem}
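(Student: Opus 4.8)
The plan is to isolate the one structural fact responsible for conjugacy and propagate it through the mixture component by component. First I would record the standard characterization: an exponential family $F_{\text{prior}}$ with sufficient statistic $T_{\text{prior}}$ is conjugate to $F_{Y\mid X}$ exactly when, for every $y$ with positive marginal likelihood, $\log p(y\mid x)$ is an affine function of $T_{\text{prior}}(x)$ with an additive constant that does not depend on $x$, say $\log p(y\mid x) = \langle \delta(y),\, T_{\text{prior}}(x)\rangle + c(y)$. Necessity is immediate: applying posterior closure to one prior $p_{\text{prior}}(\cdot;\eta_0)$ with $\eta_0$ in the interior of the parameter space and forming the likelihood ratio $p(y\mid x) = p(y)\, p_{\text{prior}}(x;\eta_1)/p_{\text{prior}}(x;\eta_0)$ makes the base measure cancel and leaves precisely this form, with $\delta(y) = \eta_1 - \eta_0$. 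Sufficiency is the familiar ``the posterior natural parameter is $\eta + \delta(y)$'' computation, valid whenever $\eta + \delta(y)$ stays in the parameter space, which holds as soon as a proper posterior exists.

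Second, I would transport this fact to each component $F_i$. Restrict to $x \in \mathcal{S}_i$ and substitute the hypothesis $T_{\text{prior}}(x) = M_i T_i(x) + v_i$ into the characterization to get $\log p(y\mid x) = \langle M_i^\top \delta(y),\, T_i(x)\rangle + \big(\langle \delta(y), v_i\rangle + c(y)\big)$, i.e.\ the likelihood is again an exponential tilt, now in $T_i(x)$, with an $x$-free additive constant. Consequently, for any $f_i \in F_i$ with natural parameter $\eta_i$, the product $p(y\mid x)\, f_i(x)$ restricted to $\mathcal{S}_i$ is proportional to the member of $F_i$ with natural parameter $\eta_i + M_i^\top \delta(y)$; the proportionality factor, call it $w_i(y) > 0$, absorbs all the $x$-independent constants together with the ratio $\exp\!\big(A_i(\eta_i) - A_i(\eta_i + M_i^\top \delta(y))\big)$ of normalizers. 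There is no clash of base measures here, since $p(y\mid x)$ contributes no factor depending on $x$, so $p(y\mid x) f_i(x)$ carries exactly $f_i$'s base measure $H_i$, the same one used by the tilted member of $F_i$, consistent with $H_{\text{mix}}$ restricting to $H_i$ on $\mathcal{S}_i$ as in Theorem~\ref{theorem:exp_family}.

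Third, I would reassemble the mixture. Take an arbitrary prior $\sum_{k=1}^K \pi_k f_k \in F_{\text{mix}}$; its posterior is proportional to $\sum_k \pi_k\, p(y\mid x)\, f_k(x)$. Because the supports $\mathcal{S}_k$ are disjoint, the previous paragraph applies term by term and this equals $\sum_k \pi_k w_k(y)\, g_k(x)$ up to a constant, where $g_k \in F_k$ is the tilted component and each $g_k$ integrates to one over $\mathcal{S}_k$. Hence the normalizer is exactly $\sum_k \pi_k w_k(y)$, and the posterior is $\sum_k \pi_k'(y)\, g_k(x)$ with $\pi_k'(y) = \pi_k w_k(y)\big/ \sum_j \pi_j w_j(y) \ge 0$ summing to one: again an element of $F_{\text{mix}}$, which is exactly conjugacy. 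In the coordinates of Theorem~\ref{theorem:exp_family} this update is transparent, the blocks $\eta_i$ shifting by $M_i^\top \delta(y)$ and the weight coordinates by the induced changes in $\log \pi_i - A_i(\eta_i)$.

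The step I expect to require the most care is the first one, specifically its scope in $x$. The characterization is only guaranteed on $\mathcal{S}_{\text{prior}}$, the support shared by all members of $F_{\text{prior}}$, whereas I need the tilted form on each $\mathcal{S}_i$; this is automatic when $\mathcal{S}_i \subseteq \mathcal{S}_{\text{prior}}$ (as for spike-and-slab, where the slab's support lies inside the Gaussian prior's support) and more generally follows from the exponential-family form of $F_{Y\mid X}$ through which $T_{\text{prior}}$ enters. One also has to check that each tilted parameter $\eta_i + M_i^\top \delta(y)$ lies in the natural parameter space of $F_i$, but this is exactly the condition that the marginal likelihood of $y$ under $\sum_k \pi_k f_k$ be finite and positive, i.e.\ that a posterior exist at all. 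Everything else is bookkeeping of $x$-independent constants.
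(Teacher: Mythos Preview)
Your proof is correct and follows essentially the same route as the paper's: write the likelihood in the Diaconis--Ylvisaker form $\log p(y\mid x) = \langle \delta(y), T_{\text{prior}}(x)\rangle + c(y)$, substitute $T_{\text{prior}}(x) = M_iT_i(x) + v_i$ on each $\mathcal{S}_i$ to obtain a tilt $M_i^\top \delta(y)$ of each component's natural parameter, and then collect terms to see that the posterior lies in $F_{\text{mix}}$. The only cosmetic difference is that the paper carries out the bookkeeping directly in the natural-parameter coordinates of $F_{\text{mix}}$ from Theorem~\ref{theorem:exp_family}, whereas you tilt each component separately and then reassemble the mixture weights; you also make explicit two technical caveats (scope of the characterization on $\bigcup_i \mathcal{S}_i$ and membership of $\eta_i + M_i^\top \delta(y)$ in the parameter space) that the paper leaves implicit.
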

 
 Intuitively, Theorem~\ref{theorem:conjugacy} says that for a given conjugate prior distribution, say $\mathcal{P}$, we can create a more flexible prior that maintains conjugacy by combining non-overlapping component distributions so long as those component distributions have the same sufficient statistics as $\mathcal{P}$ on their domain up to an affine transformation.

Note that the point mass distribution at a fixed point is a member of an exponential family with a sole member, and hence it has no sufficient statistics.  Furthermore, any function is constant on the support of a point mass. This provides an immediate corollary that mixtures of any continuous distribution with a finite number of point-masses are conjugate to the same distributions as the original continuous distribution.  Another example is mixtures of degenerate distributions where only a subset of parameters are fixed.  For instance, consider the mean vector, $\mu$, of a multivariate Gaussian.  One could put a prior on $\mu$ that is a mixture of the usual conjugate multivariate Gaussian along with degenerate distributions like a multivariate Gaussian with the condition that $\mu_i = 0$, that $\mu_i = \mu_j$ for dimensions $i$ and $j$.  In fact, any degenerate multivariate Gaussian distribution defined by affine constraints $\mathbf{M}\mu = \mathbf{v}$ for some matrix $\mathbf{M}$ and vector $\mathbf{v}$ could be included in this mixture.  

Theorem~\ref{theorem:conjugacy} makes it extremely easy to add sparsity or partial sparsity to any conjugate model.  A non-degenerate application is constructing asymmetric priors from symmetric ones.  For example, we can construct an asymmetric prior by taking a mixture of a copy of the original prior restricted to the negative reals and a copy restricted to non-negative reals.  These mixture form an exponential family because the copies have the same sufficient statistics as the original prior but have non-overlapping domains.

\section{Numerical results}
\label{sec:numerical}

To show the applicability of the theoretical results presented in Section~\ref{sec:theory}, I test sparse VI schemes using Theorems~\ref{theorem:exp_family}~and~\ref{theorem:conjugacy}--which I refer to as the non-overlapping mixtures trick--on two models and compare these schemes to non-sparse and naive VI approximations showing the superior performance of treating sparsity exactly.  The first model is the polygenic score prediction model discussed in Section~\ref{sec:motivation} and the second is a spike-and-slab prior for sparse probabilistic principal component analysis (PCA) \cite{guan2009sparse}.  Python implementations of the fitting procedures and the results of the simulations presented here are available at \url{https://github.com/jeffspence/non_overlapping_mixtures}.  All mean field results were obtained on a Late 2013 MacBook Pro and fitting the mean field VI schemes took less than five seconds per dataset for the polygenic score model and less than two minutes per dataset for sparse PCA.

\subsection{Polygenic Scores}
\label{sec:ldpred}
While the LDpred model was originally fit using Gibbs sampling \cite{vilhjalmsson2015modeling}, it may be desirable to fit the model using VI for computational reasons.  I simulated data under the LDpred model to compare two VI schemes.  The first VI scheme is obtained by using the non-overlapping mixtures trick.  The second is the naive VI scheme of introducing auxiliary variables to split the spike-and-slab prior into a mixture, and then approximating that mixture as a mixture of Gaussians.  The details of the VI schemes are presented in Appendix~\ref{sec:vi_ldpred}.  I simulated a $1000$ dimensional vector $\hat{\beta}$ from the LDpred model with $p_0 = 0.99$ so that on average about $10$ sites had non-zero effects, a level of sparsity generally consistent with realistic human data \cite{shi2019localizing}.  For each simulation I drew $\mathbf{X}$ by simulating from a Wishart distribution with an identity scale matrix and $1000$ degrees of freedom and then dividing the draw from the Wishart distribution by $1000$.  I set $\sigma^2_1$ to be $1$ and then varied $\sigma^2_e$ from $0.05$ to $1.0$.  For each of value of $\sigma^2_e$, I simulated 100 replicate datasets and tested the VI schemes as well as a baseline that is used in statistical genetics of using the raw values of $\hat{\beta}$ as estimates of $\beta$.  I also tested adaptive random walk MCMC as implemented in NIMBLE \cite{de2017programming} which was run for 1000 iterations, and boosting black box VI (BBBVI) \cite{locatello2018boosting} as implemented in \texttt{pyro} \cite{bingham2019pyro}, which sequentially fits a mixture distribution as an approximate posterior.  For BBBVI, I used the equivalent formulation $\hat{\beta}|\beta,Z \sim \mathcal{N}(\mathbf{X}(\beta * Z), \sigma^2\mathbf{X})$, where the $\beta_i$ are independent Gaussians, and the $Z_i$ are independent Bernoullis, with $*$ being the component-wise product.  For the component distributions of the variational family, I used independent Gaussians for the $\beta_i$ and independent Bernoullis for the $Z_i$.  I used 2 particles to stochastically estimate gradients, 50 components in the mixture distribution, and 2000 gradient steps per mixture component.  The BBBVI objective functions were optimized using the \texttt{adam} optimizer \cite{kingma2015adam} with learning rate $10^{-3}$ and default parameters otherwise. I evaluated the methods using the mean squared error (MSE) and correlation between the estimated and true values of $\beta$, using the variational posterior mean as the estimator for the Bayesian approaches (Figure~\ref{fig:ldpred_main}).

\begin{figure}
\centering
\begin{subfigure}[b]{0.385\textwidth}
\centering
\includegraphics[width=\textwidth]{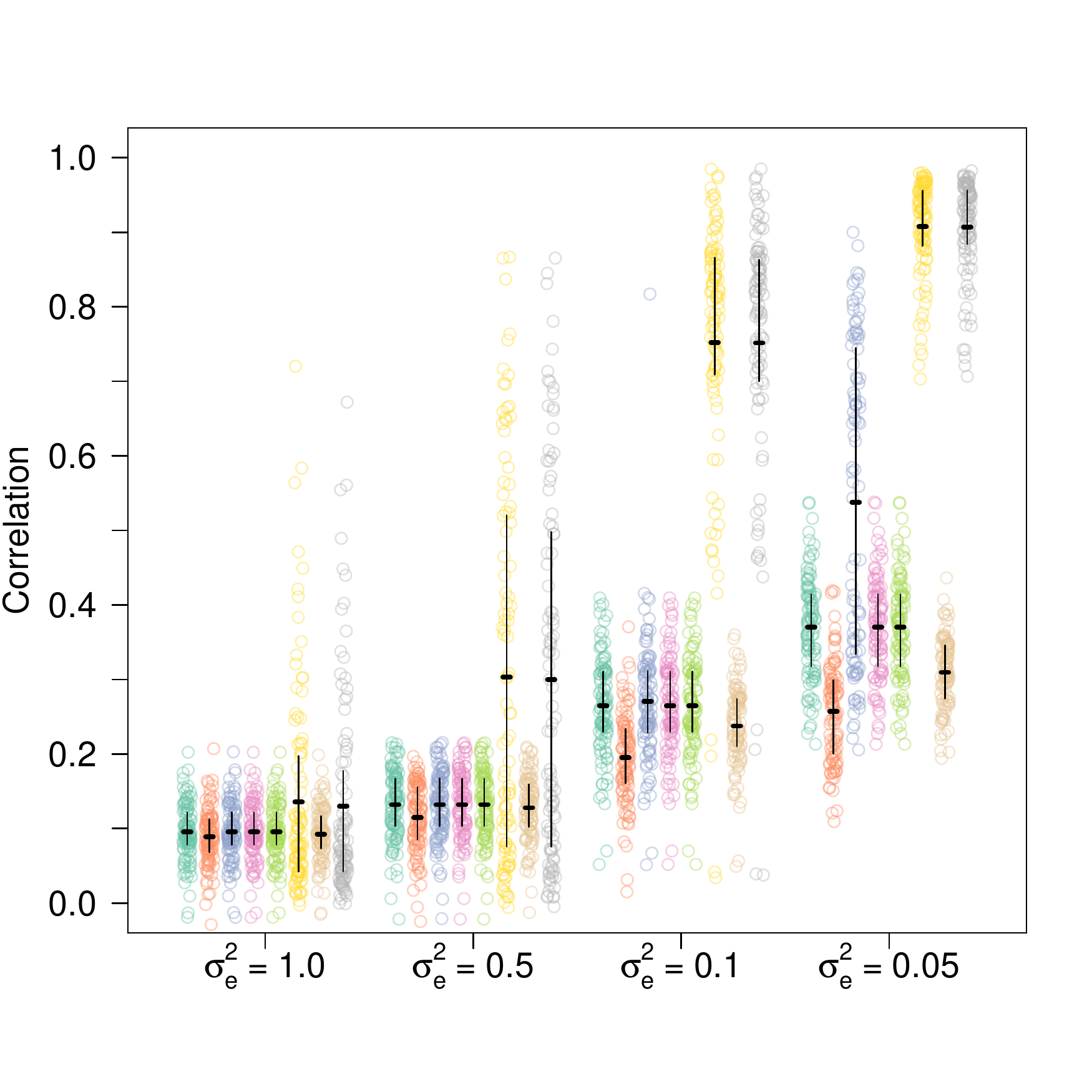}
\caption{}
\end{subfigure}
\begin{subfigure}[b]{0.495\textwidth}
\centering
\includegraphics[width=\textwidth]{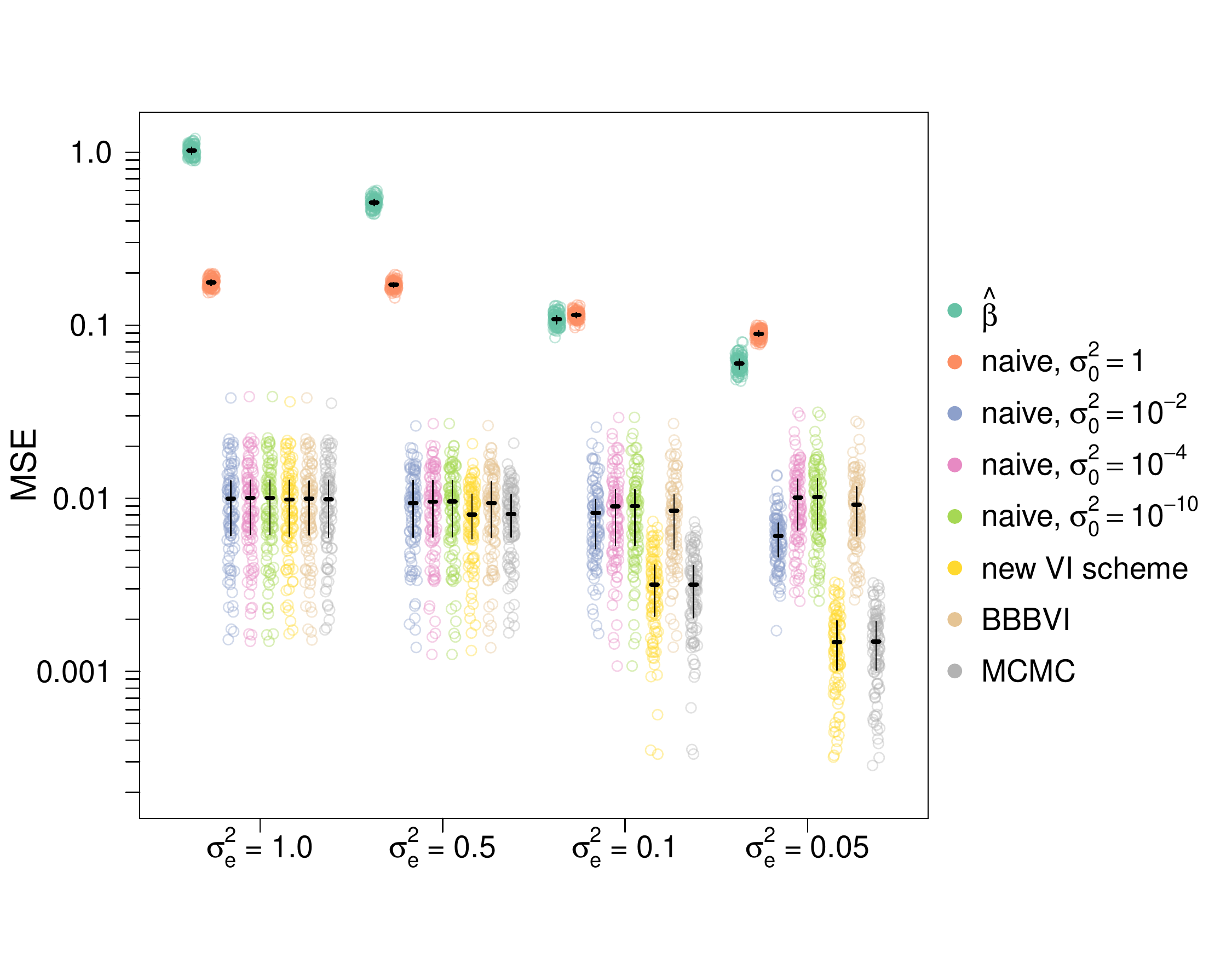}
\caption{}
\end{subfigure}
\caption{\textbf{Comparison of VI schemes for the LDpred model}\\
The non-overlapping mixtures trick (new VI scheme) is compared to the naive scheme of introducing an auxiliary variable and approximating the spike-and-slab prior by a mixture of two gaussians centered at zero, the less dispersed of which has variance $\sigma_0^2$.  As a baseline the VI schemes are compared against what is sometimes used in practice -- the raw observations, $\hat{\beta}$ as well as boosting black box VI \cite{locatello2018boosting} and adaptive random walk MCMC.  The parameter $\sigma_e^2$ controls the amount of noise, so $\sigma_e^2 = 0.05$ corresponds to a $20$ times higher signal-to-noise ration than $\sigma_e^2=1.0$.  Plot (a) compares the correlation between the estimates (posterior mean for the VI schemes, $\hat{\beta}$ for the baseline) and the true simulated values of $\beta$.  Plot (b) compares the mean squared error (MSE).  Point clouds are individual simulations, horizontal lines are means across simulations, and whiskers are interquartile ranges.  See the main text for simulation details.}
\label{fig:ldpred_main}
\end{figure}

For the naive VI scheme, there is an additional parameter $\sigma_0^2$.  When $\sigma_0^2=1=\sigma_1^2$, the method is equivalent to performing mean field VI on the non-sparse model where the prior on $\beta$ is simply a single Gaussian, which is a Bayesian version of ridge regression.  In addition to $\sigma_0^2 = 1$, I used $\sigma_0^2 \in \left\{10^{-2}, 10^{-4}, 10^{-10}\right\}$.  The results for $\sigma_0^2 = 10^{-4}$ and $\sigma_0^2 = 10^{-10}$ are indistinguishable.  All of the VI schemes (both the new scheme and the naive scheme with any value of $\sigma_0^2$) outperform the baseline of just using $\hat{\beta}$ except for in the extremely high signal-to-noise regime, where the naive model with $\sigma_0^2=1$ over-shrinks and cannot take advantage of sparsity.  BBBVI performed comparably to the naive VI schemes, but BBBVI required hours to run compared to seconds for the mean field schemes.  Meanwhile, by these metrics the non-overlapping mixtures trick performed indistinguishably from MCMC, but again took seconds per run compared to approximately 12 hours per run for MCMC. I also considered the maximum likelihood estimator (MLE) $\beta_\text{MLE} = \mathbf{X}^{-1}\hat{\beta}$ but $\mathbf{X}$ is terribly ill-conditioned resulting in high variance.  Using the MLE almost always resulted in correlations around zero (maximum correlation across all simulations was $0.13$, mean correlation was $0.005$) and extremely large MSE (minimum MSE across all simulations was 4.19, mean MSE was $\approx335000$ -- about six orders of magnitude higher than any other method).  None of the naive schemes provide a substantial improvement over the baseline in terms of correlation. In terms of MSE, the naive schemes provide some improvement if $\sigma_0^2$ is tuned properly, but the non-overlapping mixture trick outperforms all of the schemes across signal-to-noise regimes.  Exactly accounting for sparsity when the true signal is sparse substantially improves performance.

\subsection{Sparse Probabilistic PCA}
PCA \cite{pearson1901lines,hotelling1933analysis} is a widely-used exploratory data analysis method that projects high dimensional data into a low dimensional subspace define by orthogonal axes that explain a maximal amount of empirical variance.  These axes are defined by ``loadings''--weightings of the dimensions that compose a datapoint.  Dimensions with high loadings in the first few PCs are deemed ``important''  for differentiating the data points.  Unfortunately, the loadings are dense, making them difficult to interpret especially for high dimensional data.

To aid interpretability, and to leverage that in many datasets only a few variables are expected to contribute meaningfully to variation between data points, formulations of sparse PCA were developed to encourage sparsity in the loadings, usually by means of $\ell_1$ regularization \cite{zou2006sparse}.

In a parallel line of work, a Bayesian interpretation of PCA, probabilistic PCA, was developed by showing that classical PCA can be derived as the limiting maximum \emph{a posteriori} estimate of a particular generative model up to possible scaling and rotation \cite{tipping1999probabilistic}.  The probabilistic formulation more naturally extends to non-Gaussian noise models \cite{chiquet2018variational}, allows principled methods for choosing the number of principal components \cite{minka2001automatic}, gracefully handles missing data \cite{tipping1999probabilistic}, and enables speedups for structured datasets \cite{agrawal2019scalable}.

The probabilistic PCA model is
\begin{align*}
Z_1,\ldots,Z_N &\sim \mathcal{N}(0, \mathbf{I}_K)\\
X_n | Z_n &\sim \mathcal{N}(\mathbf{W}Z_n, \sigma^2_e \mathbf{I}_P)
\end{align*}
where $K$ is the number of PCs desired, $\mathbf{W} \in \mathbb{R}^{P\times K}$ is the matrix of loadings, and $Z_n$ is the PC score (i.e., projection onto the first $K$ PCs) of the $n^\text{th}$ datapoint.

These two lines of work were then brought together in sparse probabilistic PCA \cite{guan2009sparse}, which encourages sparse loadings by putting a Laplace prior on each loading, $\mathbf{W}_{pk}$.  The Laplace prior is not conjugate to the Gaussian noise model, however, but the Laplace distribution is a scale mixture of Gaussians allowing for a hierarchical decomposition, making this formulation amenable to mean field VI.

There are a number of conceptually displeasing aspects to this formulation of sparse probabilistic PCA.  First, while it is true that the maximum \emph{a posteriori} estimate of the loadings is sparse under a Laplace prior, the generative model is not sparse: because the Laplace distribution is diffuse, the loadings are non-zero almost surely.  Second, the LDpred model discussed in Section~\ref{sec:motivation} is a discrete scale mixture of Gaussians with two components.  I showed numerically in Section~\ref{sec:ldpred} and theoretically in Appendix~\ref{sec:analysis} that mean field VI breaks down in such a setting suggesting that performing mean-field VI on a scale mixture of Gaussians may be problematic especially in sparsity-inducing cases.

Motivated by these considerations I consider a spike-and-slab prior on the loadings:
\begin{align*}
\mathbf{W}_{pk} &\sim p_0\delta_0 + (1-p_0)\mathcal{N}(0, \sigma^2_1)\\
Z_n &\sim  \mathcal{N}(0, \mathbf{I}_K)\\
X_n | Z_n, \mathbf{W} &\sim \mathcal{N}(\mathbf{W}Z_n, \sigma^2_e\mathbf{I}_P)
\end{align*}
Note that \cite{guan2009sparse} considered a fully Bayesian model, which here would correspond to putting uninformative conjugate priors on $p_0$, $\sigma^2_e$, and $\sigma^2_1$.  For ease of exposition, I consider those to be fixed hyperparameters, but future work could explore putting priors on them or fitting them by maximizing the ELBo with respect to the hyperparameters in an empirical Bayes-like procedure \cite{blei2003latent}, which has been shown to automatically determine appropriate levels of sparsity in other settings \cite{wang2018empirical}.

To fit this model, I used the mean-field VI schemes described in Appendix~\ref{sec:vi_sppca}.  Briefly, I compared the performance of the naive scheme of introducing auxiliary variables, $Y_{pk} \sim \text{Bernoulli}(1-p_0)$, to split the prior on $(\mathbf{W})_{pk}$ as $(\mathbf{W})_{pk}|Y_{pk} \sim \mathcal{N}(0, \sigma^2_{Y_{pk}})$ to the scheme where the prior on $(\mathbf{W})_{pk}$ is treated exactly using the non-overlapping mixtures trick.  I compared the variational posterior mean estimates of the loadings and scores from both of these schemes to classical PCA based on SVD, as well as an ``oracle'' version of classical PCA that uses only those variables that are simulated to have non-zero loadings.

For each dataset, I simulated $500$ points with $10000$ dimensions.  The data was split into four clusters of sizes $200$, $200$, $50$, and $50$.  For $100$ of the dimensions, the value of the entry was drawn from $\mathcal{N}(\mu_c, 1)$, where $c$ indexes the cluster and each $\mu_c$ was drawn from a standard normal independently for each dimension. The remaining dimensions were drawn from standard normals. The entire matrix was then centered and scaled so that each variable had empirical mean zero, and unit empirical standard deviation, causing the simulations to differ from the generative model.  For inference I set $\sigma^2_1=0.5$, $\sigma^2_e=1$, and $p_0 = 1 - 100/10000$.  For all runs, I used $K=2$ to project onto a two-dimensional space to facilitate visualization.

\begin{figure}
\centering
\begin{subfigure}[b]{\textwidth}
\centering
\caption{}
\includegraphics[width=\textwidth]{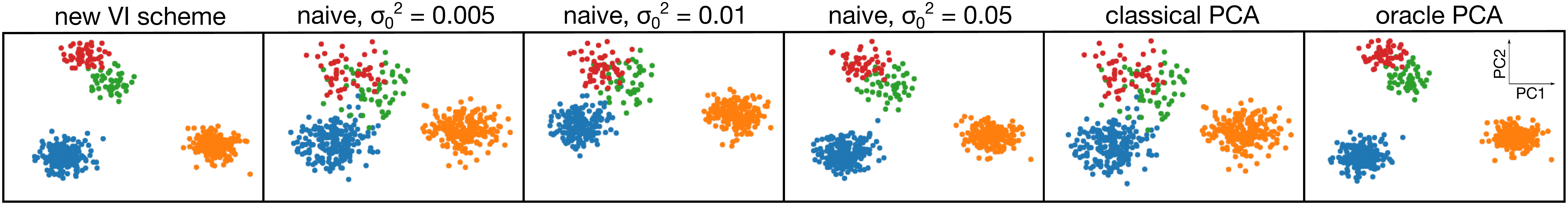}
\end{subfigure}
\begin{subfigure}[b]{0.49\textwidth}
\centering
\includegraphics[width=0.7\textwidth]{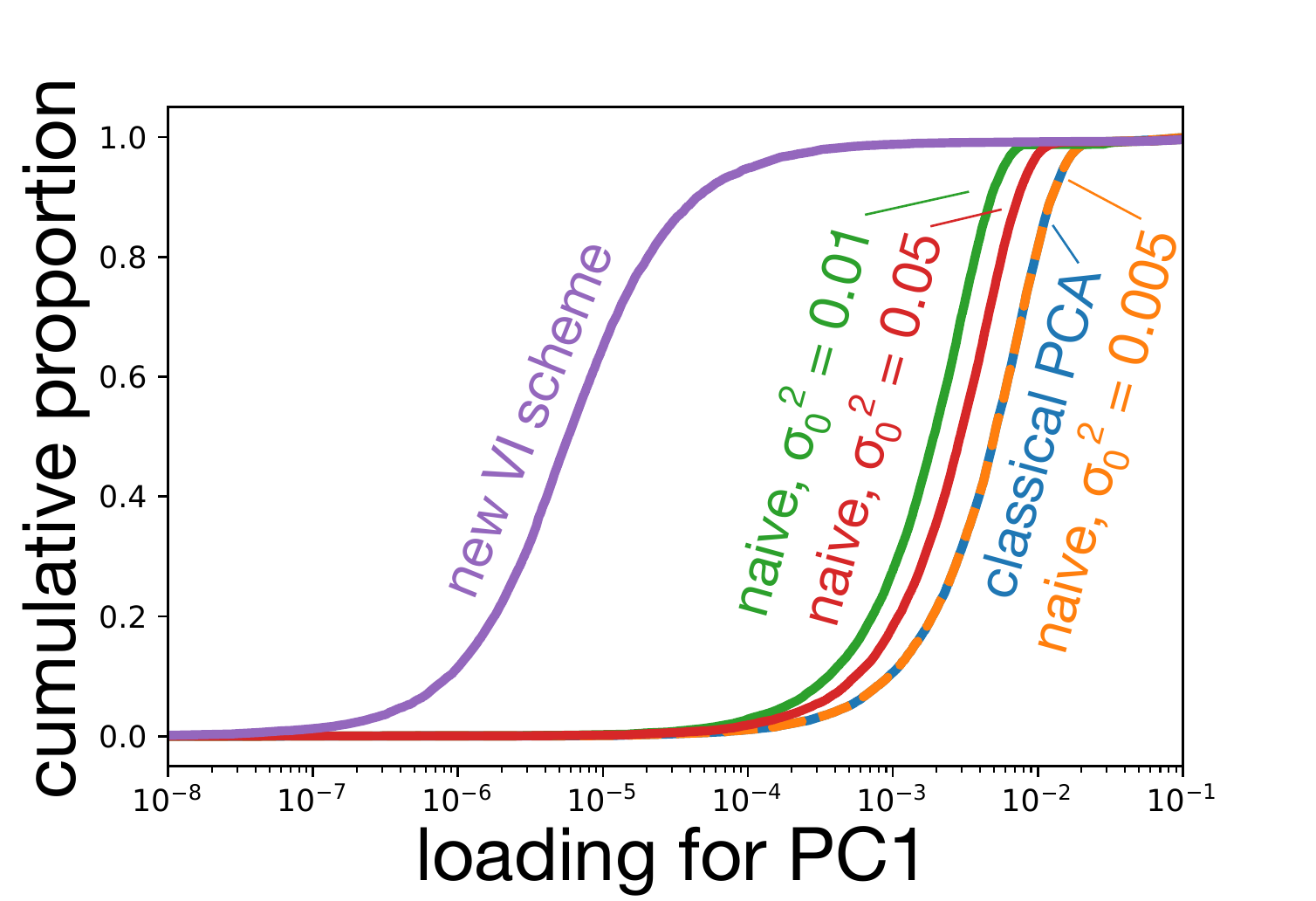}
\caption{}
\label{fig:main_loadings1}
\end{subfigure}
\begin{subfigure}[b]{0.49\textwidth}
\centering
\includegraphics[width=0.7\textwidth]{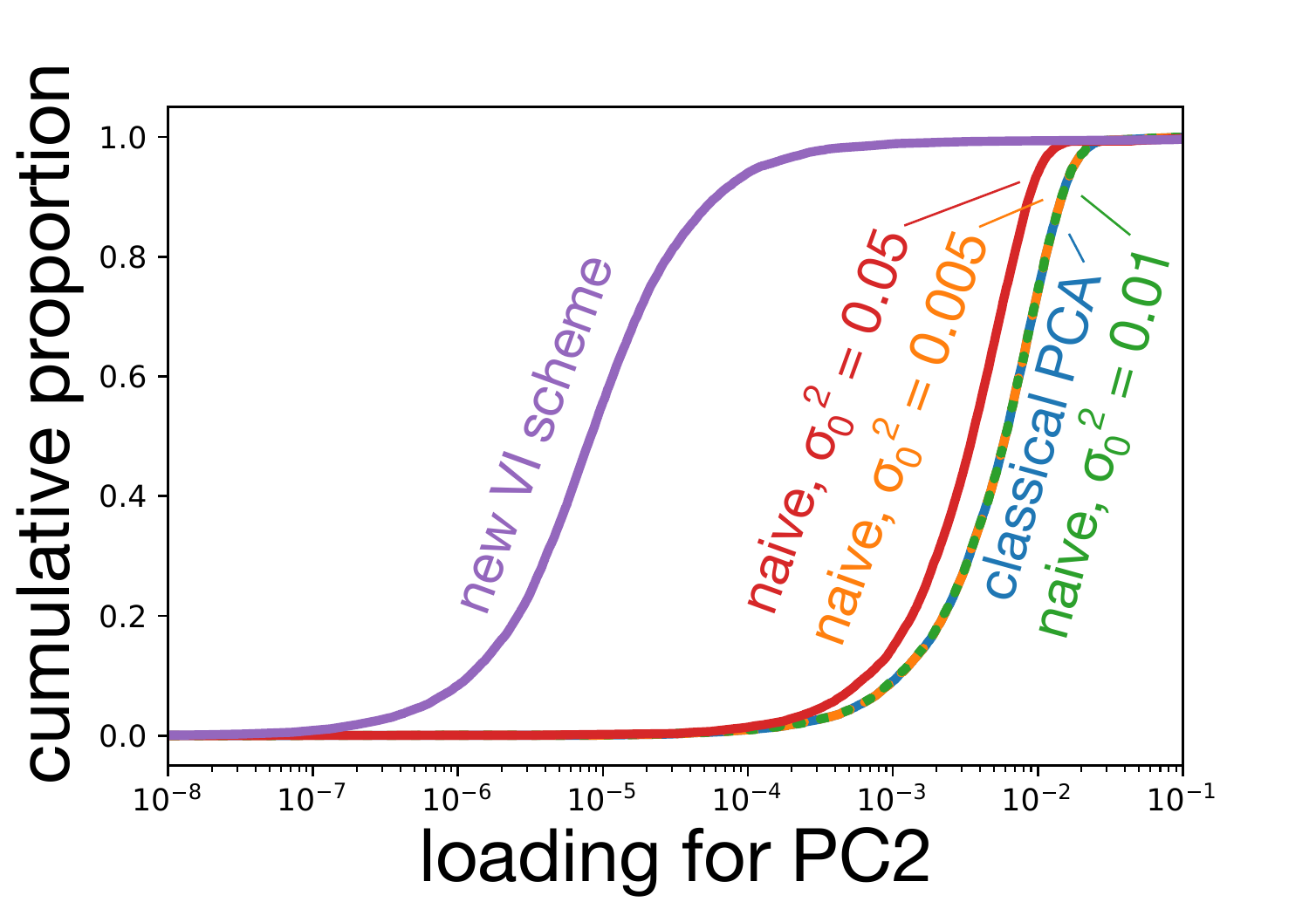}
\caption{}
\label{fig:main_loadings2}
\end{subfigure}
\caption{\textbf{Comparison of VI schemes for sparse PCA}\\
Data were simulated as described in the main text. Panel (a) shows the projections onto the first two PCs produced by various schemes. The results of the new VI scheme are visually comparable to that achievable by an ``oracle'' in this simulation scenario, while all other VI schemes do not cluster the data as well.  As $\sigma_0^2$ becomes small, the naive scheme counter-intuitively recapitulates SVD.   Panels (b) and (c) show that the loadings learned by the new VI scheme are sparse, while those of classical PCA and the various naive schemes are not.}
\label{fig:main_pca}
\end{figure}

In the naive scheme there is an additional hyperparameter, $\sigma^2_0$, for which I consider several values. Tuning this hyperparameter is crucial to obtaining reasonable results. For $\sigma^2_0 \approx \sigma^2_1$ the model is essentially probabilistic PCA with a non-sparsity inducing Gaussian prior on the loadings, while for $\sigma^2_0 \ll \sigma^2_1$ the mean-field assumption together with the zero-avoidance of VI causes the approximate posterior to put very little mass on the event $Y_{pk} = 0$, and so again the model reduces to probabilistic PCA with a Gaussian prior on the loadings.  On the other hand, the VI scheme based on the non-overlapping mixtures trick produces sensible results without requiring any tuning.  Indeed, Figure~\ref{fig:main_pca} shows that the new scheme clusters the data better than any of the naive schemes, and that as $\sigma^2_0 \downarrow 0$ the naive scheme becomes indistinguishable from classical PCA.  Furthermore, whereas the posterior mean loadings from the non-overlapping mixtures trick are indeed sparse, the loadings from the other methods are dense (Figures~\ref{fig:main_loadings1} and Figures~\ref{fig:main_loadings2}).  Additional simulations and a more quantitative measure of performance-- reconstruction error--are presented in Appendix~\ref{sec:supp_ppca}.

\section{Discussion}
VI has made it possible to scale Bayesian inference on increasingly complex models to increasingly massive datasets, but the error induced by obtaining an approximate posterior can be substantial \cite{teh2007collapsed}.  Some of this error can be mitigated by using more flexible variational families, but doing so can require alternative methods for fitting, like numerical calculation of gradients \cite{knowles2011non}, sampling-based stochastic gradient estimators \cite{kingma2014auto, rezende2014stochastic}, or other approximations \cite{jankowiak2018pathwise}.  The results of Theorems~\ref{theorem:exp_family}~and~\ref{theorem:conjugacy} provide an alternative method, using mixtures of non-overlapping exponential families to provide a more flexible variational family while maintaining conjugacy.  Even in models that are not fully conjugate, methods have been developed to exploit portions of the model that are conjugate, and the results presented here may prove useful in such schemes \cite{khan2017conjugate}.  These schemes could be especially useful in obtaining stochastic gradient updates for bayesian neural networks with spike-and-slab priors on the weights.  In particular, a ReLU applied to a Gaussian random variable is a mixture of a point mass and a Gaussian restricted to be positive, which is an exponential family by Theorem~\ref{theorem:exp_family}.

Here I focused on modeling sparse phenomena and found that the non-overlapping mixtures trick is superior to a naive approach of introducing auxiliary variables.  Yet, the pitfalls I described occur whenever mean field VI is applied to mixture distributions where the mixture components are very different.  This suggests that in some cases, it may be beneficial to use the sparse distributions presented here to approximate non-sparse mixture distributions and then treat the sparse approximation exactly.

Throughout, I assumed that the domains of the non-overlapping mixtures were specified \emph{a priori}.  This assumption could be relaxed, treating the domains as hyperparameters that could then be optimized with respect to the ELBo.  Yet, it is not obvious that for arbitrary models the objective function need to be differentiable with respect to these hyperparameters, which may necessitate the use of zeroth-order optimization procedures such as Bayesian Optimization \cite{frazier2018tutorial}.

Exponential families also play an important role in other forms of variational inference including Expectation Propagation \cite{minka2001expectation}.  The non-overlapping mixtures trick may be useful in variational approaches beyond the usual reverse KL-minimizing mean field VI.

While the non-overlapping mixtures trick makes it easy to add sparsity to conjugate models, it is not a panacea to some of the common pitfalls of VI.  For example, I also considered a sparse extension of latent Dirichlet allocation (LDA) \cite{blei2003latent, pritchard2000inference}, where documents can have exactly one topic with positive probability.  Unfortunately, the zero-avoiding nature of the reverse KL results in pathological behavior: for a document with only one topic the prior topic probabilities for each word are sparse, but in the variational posterior they must be dense.  Empirically, this results in the VI posterior only putting mass on the non-sparse mixture component and hence being indistinguishable from the usual VI approach to LDA.  In general, care should be taken when the sparsity added to a model results in the possibility of variables having zero likelihood conditioned on latent variables coming from the sparse component.

In spite of these drawbacks, providing a recipe to easily model sparsity in otherwise conjugate Bayesian models provides another avenue to model complex phenomena while maintaining the analytical and computational benefits of mean-field VI.

\section*{Broader Impact}
The primary contribution of this paper is theoretical and so the broader societal impact depends on how the theorems are used.  The polygenic score application has the possibility to improve the overall quality of healthcare, but because the majority of GWAS are performed on individuals of European ancestries, PGSs are more accurate for individuals from those ancestry groups, potentially exacerbating health disparities between individuals of different ancestries as PGSs see clinical use \cite{martin2019clinical}.  The methods presented here are equally applicable to GWAS data collected from any ancestry group, however, and so efforts to diversify genetic data will ameliorate performance differences across ancestry groups.  PGSs used for some traits such as sexual orientation \cite{ganna2019large}, educational attainment \cite{harden2020genetic}, or stigmatized psychiatric disorders \cite{international2009common} raise thorny ethical considerations, especially when the application of such PGSs could enable genetic discrimination or fuel dangerous public misconceptions about the genetic basis of such traits \cite{palk2019potential}.  On the other hand, PGSs applied to diseases have the potential to improve health outcomes and so if used responsibly could provide tremendous benefit to society.

\begin{ack}
I would like to thank Nasa Sinnott-Armstrong and Jonathan Pritchard for piquing my interest in the polygenic score application, and Jeffrey Chan, Amy Ko, Clemens Weiss, and four anonymous reviewers for helpful feedback on the manuscript. I was funded by a National Institutes of Health training grant (5T32HG000044-23).
\end{ack}

\bibliographystyle{plain}
\bibliography{sparse_exponential_family.bib}

\clearpage

\appendix
\section{Analysis of naive Mean Field VI for the LDpred Model when $P=1$}
\label{sec:analysis}
When there is only one mutation, the naive mean field VI approach to the LDpred model simplifies to
\begin{align*}
Z &\sim \text{Bernoulli}(1 - p_0)\\
\beta | Z &\sim \mathcal{N}(0, \sigma^2_{Z})\\
\hat{\beta} | \beta &\sim \mathcal{N}(\beta, \sigma^2_e).
\end{align*}
In this simplified setting it is possible to obtain a closed form expression for the posterior.  For the purposes of contrasting with VI, I consider the posterior probability that $\beta$ comes from each component of the mixture distribution, $p(Z=z | \hat{\beta})$ as well as the posterior mean of $\beta$, $\mathbb{E}[\beta | \hat{\beta}]$.  In particular, for the case $\sigma_0^2 = 0$,
\begin{align}
p(Z=0| \hat{\beta}) &= \frac{p_0}{p_0 + (1-p_0)\sqrt{2\pi\frac{1}{\frac{1}{\sigma_e^2} + \frac{1}{\sigma_1^2}}} \exp\Big\{\frac{\hat{\beta}^2}{2(\sigma_e^2 + \frac{\sigma_e^4}{\sigma_1^2})}\Big\}}\label{eq:true_post}
\\
\mathbb{E}[\beta | \hat{\beta}] &= \Big(1 - p(Z=0 | \hat{\beta})\Big) \frac{\hat{\beta}}{\frac{\sigma_e^2}{\sigma_1^2}+1}.\label{eq:true_mean}
\end{align}

In this case, the usual approach to mean field VI would be to find an approximate posterior that factorizes $q(\beta, Z) = q(\beta)q(Z)$ and assume that $q(\beta)$ and $q(Z)$ are conditionally conjugate, which in this case would be that $q(\beta) = \mathcal{N}(\mu, s^2)$ and $q(Z) = \text{Bernoulli}(1 - \psi_0)$.  As stated in the main text, the ELBo is undefined if $\sigma_0^2 =0$, so instead consider $\sigma_0^2$ to be small but nonzero.

Under these assumptions, I show that for any $\hat{\beta}$ there is a $\sigma_0^2$ small enough such that the probability under $q$ that $Z=1$ is approximately either 0 or 1 and as a result the variational posterior mean of beta is either approximately 0 or approximately equally to the non-sparse case where $p_0=0$.  That is, mean field VI either over-shrinks effects to zero or provides no more shrinkage than just having a single gaussian prior on the effect sizes.  In contrast note that $p(Z=0 |\hat{\beta})$ varies smoothly as a function of $\hat{\beta}$, and consequently by Equation~\ref{eq:true_mean}, the posterior mean varies smoothly from shrinking tiny effects to zero to providing less shrinkage for large effects.

\begin{theorem}
Let $q_{\hat{\beta}, \sigma_0^2}(\beta, Z)$ be the approximate posterior obtained from the LDpred model with $P=1$ for data $\hat{\beta}$.  For any $\delta$, there exists an $\epsilon$ such that for all $\sigma_0^2 < \epsilon$, either:
\begin{align}
q_{\hat{\beta}, \sigma_0^2}(Z = 0) &\ge 1 - \delta \label{eq:thm1}\\
\Big|\mathbb{E}_{q_{\hat{\beta}, \sigma_0^2}}[\beta]\Big| &\le \delta \label{eq:thm2}
\end{align}
or
\begin{align}
q_{\hat{\beta}, \sigma_0^2}(Z = 0) &\le \delta \label{eq:thm3}\\
\Big|\mathbb{E}_{q_{\hat{\beta}, \sigma_0^2}}[\beta]\Big| &\ge  \frac{|\hat{\beta}|}{\frac{\sigma_e^2}{\sigma_1^2} + 1} - \delta, \label{eq:thm4}
\end{align}
with the case depending on the values of $p_0$, $\sigma_e^2$, $\sigma_1^2$, and $\hat{\beta}$.
\end{theorem}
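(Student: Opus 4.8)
The plan is to reduce the problem to a one‑dimensional optimization. Parametrize the variational family by the mean $\mu$ and variance $s^2$ of $q(\beta)$ and by $\psi_0 = q(Z=0)$; then, up to an additive constant, the ELBo is
\[
L(\mu,s^2,\psi_0) = -\frac{(\hat\beta-\mu)^2 + s^2}{2\sigma_e^2} - \frac{\mu^2+s^2}{2}\Big(\tfrac{1-\psi_0}{\sigma_1^2} + \tfrac{\psi_0}{\sigma_0^2}\Big) - \frac{1-\psi_0}{2}\log\sigma_1^2 - \frac{\psi_0}{2}\log\sigma_0^2 + \tfrac12\log s^2 + H(\psi_0) + \psi_0\log\tfrac{p_0}{1-p_0},
\]
with $H$ the binary entropy. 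For fixed $\psi_0$ this is strictly concave in $(\mu,s^2)$ with no cross term, hence maximized at $\mu^\star(\psi_0) = \hat\beta/(\sigma_e^2 P(\psi_0))$ and $s^{\star2}(\psi_0) = 1/P(\psi_0)$, where $P(\psi_0) := \tfrac1{\sigma_e^2} + \tfrac{1-\psi_0}{\sigma_1^2} + \tfrac{\psi_0}{\sigma_0^2}$; note that at this point $\mathbb{E}_q[\beta] = \mu^\star(\psi_0)$ and $q(Z=0) = \psi_0$. Profiling out $\mu,s^2$ gives a smooth $\tilde L(\psi_0)$ on $[0,1]$, and the variational solution is its maximizer $\psi_0^\star$. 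Since $\tilde L'(\psi_0)\to+\infty$ as $\psi_0\downarrow 0$ and $\to-\infty$ as $\psi_0\uparrow 1$ (the $\mathrm{logit}$ term in $\tilde L'$ dominates), $\psi_0^\star$ is interior, so $\tilde L'(\psi_0^\star)=0$ and $\tilde L''(\psi_0^\star)\le 0$.

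Next I would localize $\psi_0^\star$ as $\sigma_0^2\downarrow 0$. It cannot lie in a fixed subinterval $[\delta,1-\delta]$: there $P(\psi_0^\star)\ge \delta/\sigma_0^2\to\infty$, so $m^\star := \mu^{\star2}+s^{\star2} = O(\sigma_0^2)$, and in $\tilde L'(\psi_0^\star) = -\mathrm{logit}(\psi_0^\star) + \tfrac12\log(\sigma_1^2/\sigma_0^2) + \log\tfrac{p_0}{1-p_0} - \tfrac12(\tfrac1{\sigma_0^2}-\tfrac1{\sigma_1^2})m^\star$ every term stays bounded except $\tfrac12\log(\sigma_1^2/\sigma_0^2)\to+\infty$, contradicting $\tilde L'(\psi_0^\star)=0$. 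Hence for $\sigma_0^2$ small either $\psi_0^\star>1-\delta$ or $\psi_0^\star<\delta$. In the first case $q(Z=0)=\psi_0^\star\ge 1-\delta$ and $|\mathbb{E}_q[\beta]| = |\hat\beta|/(\sigma_e^2 P(\psi_0^\star)) \le 2|\hat\beta|\sigma_0^2/(\sigma_e^2(1-\delta))\le\delta$ once $\sigma_0^2$ is small enough, which is the over‑shrinkage branch \eqref{eq:thm1}--\eqref{eq:thm2}.

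In the remaining case $\psi_0^\star<\delta$ we have $q(Z=0)\le\delta$, i.e.\ \eqref{eq:thm3}, and it remains to show $|\mathbb{E}_q[\beta]| = |\hat\beta|/(\sigma_e^2 P(\psi_0^\star))$ is within $\delta$ of $|\hat\beta|/(\sigma_e^2 P(0)) = |\hat\beta|/(\sigma_e^2/\sigma_1^2+1)$; since $P(\psi_0^\star)\ge P(0)$ this reduces to bounding the gap $P(\psi_0^\star)-P(0) = \psi_0^\star(\tfrac1{\sigma_0^2}-\tfrac1{\sigma_1^2})$. Here the second‑order condition is essential, because an intermediate coordinate‑ascent fixed point with $\psi_0^\star$ small but $\psi_0^\star/\sigma_0^2$ bounded away from $0$ genuinely exists, so $\tilde L'(\psi_0^\star)=0$ alone is not enough. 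Assuming for contradiction that the gap exceeds some $\eta=\eta(\delta,\hat\beta,\sigma_e,\sigma_1)>0$, I would estimate, with $b:=\tfrac1{\sigma_0^2}-\tfrac1{\sigma_1^2}$ and $P:=P(\psi_0^\star)$,
\[
\tilde L''(\psi_0^\star) = -\tfrac1{\psi_0^\star(1-\psi_0^\star)} + \tfrac{b^2}2\Big(\tfrac1{P^2}+\tfrac{2\hat\beta^2}{\sigma_e^4 P^3}\Big),
\]
splitting on whether $\psi_0^\star b \gtrsim P(0)$ (then $b/P\gtrsim 1/\psi_0^\star$, so the positive term is $\gtrsim 1/\psi_0^{\star2}\gg 1/\psi_0^\star$) or $\psi_0^\star b\lesssim P(0)$ (then $P=O(1)$, so the positive term is $\gtrsim b^2\asymp\sigma_0^{-4}$ while $1/\psi_0^\star\lesssim 1/(\eta\sigma_0^2)$); in either regime $\tilde L''(\psi_0^\star)>0$ for $\sigma_0^2$ small, contradicting maximality. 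Hence the gap is $\le\eta$, which yields \eqref{eq:thm4}. Taking $\epsilon$ to be the minimum of the finitely many $\sigma_0^2$‑thresholds accumulated above (each a function of $\delta,p_0,\sigma_e^2,\sigma_1^2,\hat\beta$) completes the proof; $\hat\beta=0$ is trivial, since then $\mathbb{E}_q[\beta]=0$ always and \eqref{eq:thm2},\eqref{eq:thm4} hold unconditionally.

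The main obstacle is exactly this last step. The profile $\tilde L$ typically has two local maxima --- a near‑$0$ ``non‑sparse'' one and a near‑$1$ ``over‑shrunk'' one --- separated by a local minimum at an intermediate $\psi_0$, and the delicate point is to certify that this intermediate stationary point is a minimum rather than a maximum; the second‑derivative estimate above does this, but only after a careful case split on the size of $P(\psi_0^\star)$, which is the one genuinely non‑routine computation in the argument.
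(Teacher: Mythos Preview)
Your proof is correct and takes a different route from the paper. Both begin by profiling $(\mu,s^2)$ out of the ELBo to reduce to a one-dimensional problem in $\psi_0$, and the resulting formulas for $\mu^\star,s^{\star2}$ agree; the arguments then diverge. The paper works with ELBo \emph{values}: it substitutes $\mu^\star,s^{\star2}$ back in and shows that for any fixed $\psi_0\in(0,1)$ the profiled ELBo equals $\frac12(1-\psi_0)\log\sigma_0^2+O(1)\to-\infty$, while at $\psi_0=0$ and $\psi_0=1$ it stays $O(1)$; the remaining borderline case $\psi_0\to0$ with $\psi_0/\sigma_0^2\to c>0$ is then dispatched by computing $\Delta\text{ELBo}:=\tilde L(0)-\tilde L(\psi_0)$ directly and showing it is strictly positive. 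You instead use \emph{derivative} conditions: the first-order condition $\tilde L'(\psi_0^\star)=0$ excludes $\psi_0^\star\in[\delta,1-\delta]$, and the second-order condition $\tilde L''(\psi_0^\star)\le0$ excludes $\psi_0^\star<\delta$ with $\psi_0^\star b$ bounded away from $0$. The paper's zeroth-order comparison is the shorter path --- it avoids second derivatives altogether, and its $\Delta\text{ELBo}$ calculation is brief --- and it makes visible the rate $\frac12(1-\psi_0)\log\sigma_0^2$ at which the interior ELBo collapses, which is exactly what produces the sharp threshold in Figure~\ref{fig:thresh}. Your approach buys something different: it certifies directly that the intermediate stationary point is a strict local \emph{minimum} of the profile, confirming the two-maxima-one-minimum picture you sketch and giving a handle on which basin coordinate ascent lands in from a given initialization.
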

\begin{proof}
I begin by writing the ELBo:
\begin{align}
\text{ELBo} &= \mathbb{E}_{q_{\hat{\beta}, \sigma_0^2}}[\log p (\hat{\beta} | \beta)] - \text{KL}(q_{\hat{\beta}, \sigma_0^2}(\beta, Z) || p(\beta, Z)) \nonumber\\
&= \text{constant} - \frac{1}{2 \sigma_e^2}(\mu^2 + s^2 - 2\hat{\beta}\mu) + \frac{1}{2}\log s^2 - \psi_0\log\psi_0 - (1-\psi_0)\log (1 -\psi_0) \nonumber\\
&\hspace{1cm} - \frac{1}{2}\psi_0\log \sigma^2_0 -\frac{1}{2}(1-\psi_0)\log \sigma_1^2 - \Big(\frac{\psi_0}{2\sigma_0^2} + \frac{1-\psi_0}{2\sigma_1^2}\Big)(\mu^2 + s^2) \nonumber\\
&\hspace{1cm}+ \psi_0 \log p_0 + (1 - \psi_0) \log(1-p_0) \label{eq:elbo}.
\end{align}
Taking partial derivatives I arrive at the equations for the critical points for $\mu$ and $s^2$:
\begin{align*}
\frac{d\text{ELBo}}{d\mu} &= -\frac{\hat{\beta}}{\sigma^2_e} - \Big(\frac{1}{\sigma_e^2} + \frac{\psi_0}{\sigma_0^2} + \frac{1-\psi_0}{\sigma_1^2}\Big)\mu\\
\frac{d\text{ELBo}}{d s^2} &= -\frac{1}{2\sigma_e^2} + \frac{1}{2s^2} - \frac{\psi_0}{2\sigma_0^2} - \frac{1 - \psi_0}{2\sigma_1^2}.
\end{align*}
Rearranging I obtain
\begin{align}
\mu &= \frac{\hat{\beta}}{1 + \psi_0\frac{\sigma_e^2}{\sigma_0^2} + (1-\psi_0)\frac{\sigma_e^2}{\sigma_1^2}}\label{eq:mu}\\
s^2 &= \frac{1}{1/\sigma_e^2 + \psi_0 / \sigma_0^2 + (1-\psi_0) / \sigma_1^2}\label{eq:sigma}.
\end{align}
Now, I show that for $\psi_0 = 0$ or $\psi_0 = 1$ the ELBo is larger than for any other $\psi_0$ so long as $\lim_{\sigma_0^2\downarrow0} \psi_0 / \sigma_0^2 > 0$.  This indicates that the optimal value of $\psi_0$ must converge to either 0 or 1 in the limit of small $\sigma_0^2$ and furthermore, if $\psi_0$ converges to $0$ it must do so faster than $\sigma_0^2$.  Taking limits in Equations~\ref{eq:mu}~and~\ref{eq:sigma} under these conditions gives Equations~\ref{eq:thm1},~\ref{eq:thm2},~\ref{eq:thm3},~and~\ref{eq:thm4}.

If $\psi_0 = 0$, then plugging $\psi_0$ into Equations~\ref{eq:mu}~and~\ref{eq:sigma}, it is clear that the values of both $\mu$ and $s^2$ are independent of $\sigma_0^2$.  Therefore, $\text{ELBo}(\psi_0=0) = O(1)$.  

On the other hand, plugging $\psi_0 = 1$ into Equations~\ref{eq:mu}~and~\ref{eq:sigma} gives $\mu = \sigma_0^2 \hat{\beta} / (\sigma_0^2 + \sigma_e^2)$ and $s^2 = \sigma_0^2 / (1 + \frac{\sigma_0^2 }{\sigma_e^2})$.  Therefore, $\log s^2 = \log \sigma_0^2 - \log (1 + \frac{\sigma_0^2 }{\sigma_e^2}) = \log \sigma_0^2 + O(\sigma_0^2)$, and $\mu^2 + s^2 = O(\sigma_0^2)$.  Plugging these results into the ELBo of Equation~\ref{eq:elbo} gives
\[
\text{ELBo}(\psi_0=1) = \frac{1}{2}\log s^2 - \frac{1}{2}\log\sigma_0^2 - \frac{1}{2\sigma_0^2} (\mu^2 + s^2) + O(1) = O(1).
\]
Now, for fixed $\psi_0 \in (0, 1)$,
\begin{align*}
\log s^2 &= \log \sigma_0^2 - \log \Big(\frac{\sigma_0^2}{\sigma_e^2} + \psi_0 + (1-\psi_0) \frac{\sigma_0^2}{\sigma_1^2} \Big) = \log \sigma_0^2  + O(\sigma_0^2)\\
\mu^2 + s^2 &= O(\sigma_0^2)
\end{align*}
which gives an ELBo of
\[
\text{ELBo}(0 < \psi_0 < 1) = \frac{1}{2}\log s^2 -\frac{1}{2}\psi_0 \log \sigma_0^2 - \frac{\psi_0}{2\sigma_0^2} (\mu^2 + \sigma^2) + O(1) = \frac{1}{2}(1 - \psi_0) \log \sigma_0^2 + O(1).
\]
For $\psi_0 \in (0, 1)$,
\[
\lim_{\sigma_0^2 \downarrow 0} \frac{1}{2}(1 - \psi_0) \log \sigma_0^2 = -\infty
\]
showing that in the limit of small $\sigma_0^2$, $\psi_0$ must converge to either $0$ or $1$.  Now, because $\psi_0 / \sigma_0^2$ appears in Equations~\ref{eq:mu}~and~\ref{eq:sigma}, some care must be taken in the case where $\psi_0$ converges to $0$.  In particular, I show that the ELBo is larger when $\psi_0=0$ than it is when $\lim_{\sigma_0^2 \downarrow 0} \psi_0 / \sigma_0^2 = c$, for some positive, finite constant $c$ so terms like $\psi_0 / \sigma_0^2$ can be neglected in the limit when obtaining Equation~\ref{eq:thm4} from Equation~\ref{eq:mu}.

Noting that by Equations~\ref{eq:mu}~and~\ref{eq:sigma}
\[
\mu = \frac{s^2}{\sigma_e^2}\hat{\beta},
\]
and collecting terms and rearranging the ELBo assuming that $\psi_0 < 1$ and $\psi_0 \downarrow 0, \sigma_0^2 \downarrow 0, \psi_0/\sigma_0^2 \rightarrow c$ results in
\begin{align*}
\text{ELBo} &= -\frac{1}{2} + \frac{\hat{\beta}^2s^2}{2\sigma_e^4} + \frac{1}{2}\log s^2 - \psi_0\log\psi_0 - (1-\psi_0)\log(1-\psi_0)\\
&\hspace{1cm} - \frac{1}{2} \psi_0 \log\sigma_0^2 -\frac{1}{2}(1-\psi_0) \log \sigma_1^2 + \psi_0\log p_0 + (1-\psi_0)\log(1-p_0)\\
&=  -\frac{1}{2} + \log(1-p_0) + \frac{\hat{\beta}^2s^2}{2\sigma_e^4} + \frac{1}{2}\log s^2 + o(1).
\end{align*}
Now considering the ELBo as a function of $\psi_0$, I consider the difference of the ELBo evaluated at $\psi_0=0$, and that evaluated at $\psi_0 < 1$ which will be denoted as $\Delta\text{ELBo}$.
\begin{align*}
\Delta\text{ELBo} &= \frac{\hat{\beta}^2}{2\sigma_e^4}(s^2|_{\psi_0=0} - s^2|_{\psi_0}) + \frac{1}{2} \log s^2|_{\psi_0=0} - \frac{1}{2}\log s^2|_{\psi_0} + o(1)\\
&> \frac{\hat{\beta}^2}{2\sigma_e^4}(s^2|_{\psi_0=0} - s^2|_{\psi_0}) + o(1)\\
&= \frac{\hat{\beta}^2}{2\sigma_e^2}\left(\frac{1}{1/\sigma_e^2 + 1/\sigma_1^2} - \frac{1}{1/\sigma_e^2 + \psi_0/\sigma_0^2 + (1-\psi_0)/\sigma_1^2)}\right) + o(1)\\
&=  \frac{\hat{\beta}^2}{2\sigma_e^2}\left(\frac{\psi_0/\sigma_0^2 - \psi_0/\sigma_1^2}{(1/\sigma_e^2 + 1/\sigma_1^2)(1/\sigma_e^2 + \psi_0/\sigma_0^2 + (1-\psi_0)/\sigma_1^2)}\right) + o(1)\\
&=  \frac{\hat{\beta}^2}{2\sigma_e^2}\left(\frac{c}{(1/\sigma_e^2 + 1/\sigma_1^2)(1/\sigma_e^2 + c +1/\sigma_1^2)}\right) + o(1)
\end{align*}
where the first inequality follows from the fact that $s^2$ is largest when $\psi_0=0$.  This quantity is obviously positive for any $\sigma_0^2$ sufficiently small and therefore if $\psi_0$ converges to $0$, it must do so faster that $\sigma_0^2$ completing the proof.
\end{proof}

The fact that under the VI approximate posterior $q(Z=1)$ is either close to $0$ or close to $1$, while under the true posterior, $p(Z=1 | \hat{\beta})$ varies smoothly as a function of $\hat{\beta}$ suggests a thresholding phenomenon where for $\hat{\beta}$ slightly less than the threshold, the VI approximate posterior dramatically over shrinks, while for $\hat{\beta}$ slightly greater than the threshold the VI approximate posterior dramatically under shrinks essentially performing hard thresholding.  In Figure~\ref{fig:thresh} I show numerically that this is indeed the case, highlighting the failure of mean field VI to provide a reasonable approximation to the posterior for even this toy model.

\begin{figure}
\centering
\includegraphics{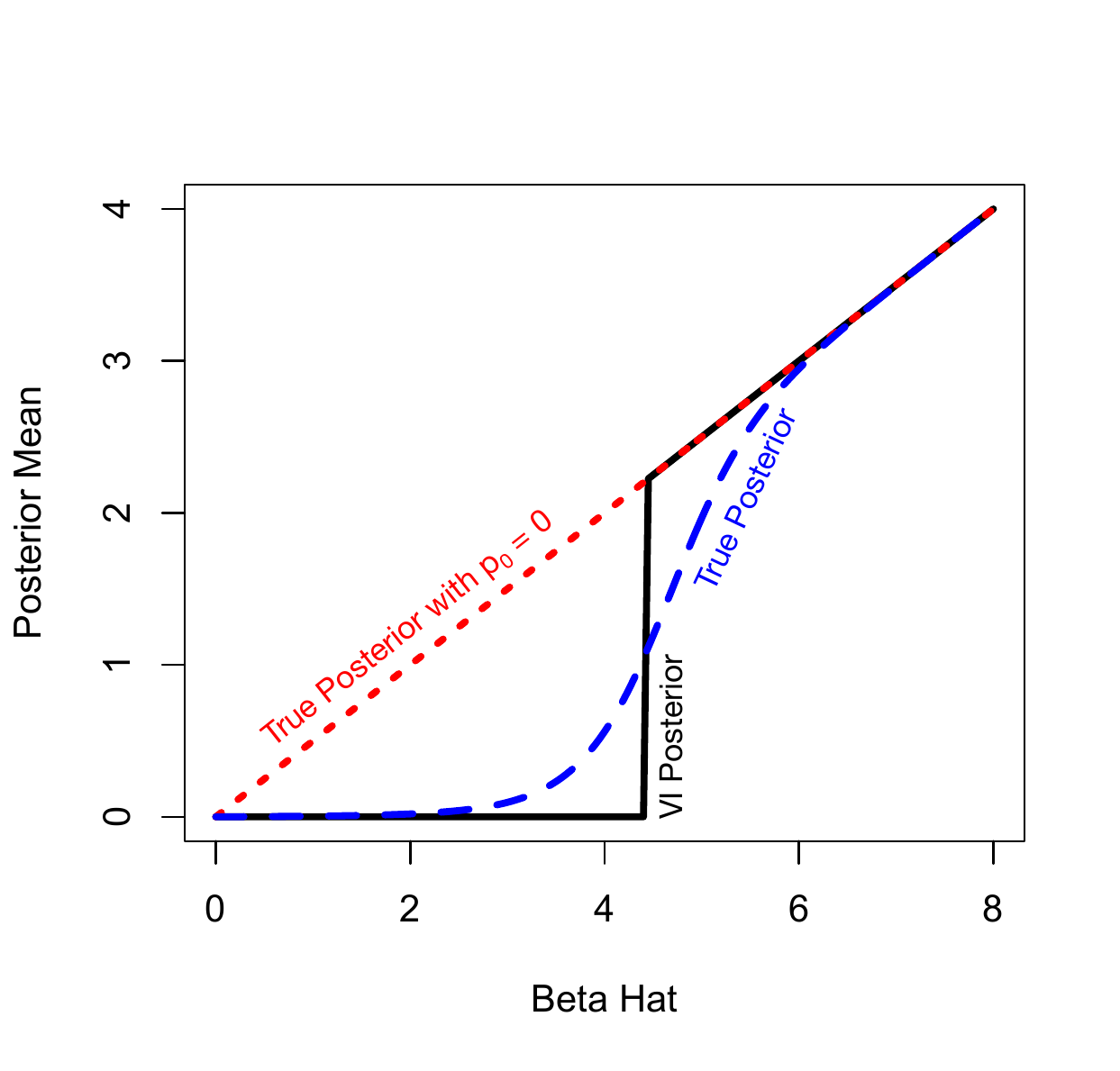}
\caption{\textbf{Thresholding phenomenon for naive mean field VI for the LDpred model}\\
The approximate posterior mean when using naive mean field VI undergoes a thresholding phenomenon.  For values of $\hat{\beta}$ close to zero, the VI scheme over-shrinks the posterior mean essentially to zero, while above the threshold, the VI scheme under-shrinks essentially matching the model without sparsity.  The results here were generated with $p_0 = 0.99, \sigma_1^2 = \sigma_e^2 = 1$ and the VI model was fit using $\sigma_0^2 \approx 10^{-22}$. The results are qualitatively similar for all $\sigma_0^2 \le 0.01$, and for larger $\sigma_0^2$ the VI model significantly under-shrinks for small $\hat{\beta}$.}
\label{fig:thresh}
\end{figure}

\section{Proof of Theorem~\ref{theorem:exp_family}}
\begin{proof}
First, note that any measure $f_\text{mix} \in F_{\text{mix}}$ may clearly be written as 
\[
\frac{df_\text{mix}}{dH}(x) = \sum_{i=1}^K \mathbb{I}\left\{x \in \mathcal{S}_i\right\} \pi_i \exp\left\{\langle\eta_i,T_i\rangle - A_i(\eta_i)\right\} \frac{dH_i}{dH}(x),
\] 
because each $f_i$ is a member of an exponential family.  Since all of the events in the indicators are mutually exclusive by hypothesis, the resulting measure may be re-written as
\begin{align*}
\frac{df_\text{mix}}{dH}(x) &= \prod_{i=1}^K \exp\left\{\mathbb{I}\left\{x \in \mathcal{S}_i\right\}\left(\log\pi_i + \langle\eta_i,T_i\rangle - A_i(\eta_i)\right)\right\} \left(\frac{dH_i}{dH}(x)\right)^{\mathbb{I}\left\{x \in \mathcal{S}_i\right\}}\\
&= \left[\prod_{i=1}^K \left(\frac{dH_i}{dH}(x)\right)^{\mathbb{I}\left\{x \in \mathcal{S}_i\right\}}\right] \exp\left\{\sum_{i=1}^K \mathbb{I}\left\{x \in \mathcal{S}_i\right\}\left(\log\pi_i + \langle\eta_i,T_i\rangle - A_i(\eta_i)\right)\right\}\\
&= \left[\prod_{i=1}^K \left(\frac{dH_i}{dH}(x)\right)^{\mathbb{I}\left\{x \in \mathcal{S}_i\right\}}\right] \exp\left\{\sum_{i=1}^K \mathbb{I}\left\{x \in \mathcal{S}_i\right\}\left(\log\pi_i - A_i(\eta_i)\right) + \langle\eta_i, \mathbb{I}\left\{x \in \mathcal{S}_i\right\}T_i\rangle\right\}\\
&= \frac{dH_\text{mix}}{dH}(x)\exp\left\{\langle\eta_\text{mix}, T_\text{mix}(x)\rangle - A_\text{mix}(\eta_\text{mix})\right\}
\end{align*}
completing the proof.
\end{proof}

\section{Proof of Theorem~\ref{theorem:conjugacy}}
 \begin{proof}
 Begin by noting that by the conjugacy conditions (see e.g., \cite{diaconis1979conjugate}), for any measure $f _{Y|X} \in F_{Y|X}$,
 \[
 df_{Y|X}(y|x) = dH^*(y) \exp \left\{ \left\langle \left[T^*_Y(y), \alpha \right], T_\text{prior}(x) \right\rangle\right\},
 \]
 for some $\alpha$ and $H^*(y)$, where $T_\text{prior}$ is assumed without loss of generality to be ordered in a particular way, and $T^*_Y$ are the subset of sufficient statistics of $f_{Y|X}$ that are coefficients of $T_\text{prior}$.
 
 Now, since $\mathbb{I}\left\{x \in \mathcal{S}_i\right\}$ are mutually exclusive and exactly one such event occurs for each $x$,
 \begin{align*}
  df_{Y|X}(y|x) &= dH^*(y) \exp \left\{ \sum_{i=1}^K\left\langle \left[T^*_Y(y), \alpha \right], \mathbb{I}\left\{x \in \mathcal{S}_i\right\}T_\text{prior}(x) \right\rangle\right\}\\
  &= dH^*(y) \exp \left\{ \sum_{i=1}^K\left\langle \left[T^*_Y(y), \alpha \right], \mathbb{I}\left\{x \in \mathcal{S}_i\right\}(M_iT_i(x) + v_i)\right\rangle\right\} \\
  &= dH^*(y) \exp \Bigg\{ \sum_{i=1}^K\left\langle M_i^T\left[T^*_Y(y), \alpha \right], \mathbb{I}\left\{x \in \mathcal{S}_i\right\}T_i(x)\right\rangle\\
  &\hspace{3cm}+\sum_{i=1}^{K-1}(v_i^T\left[T^*_Y(y), \alpha \right] - v_K^T\left[T^*_Y(y), \alpha \right]) \mathbb{I}\left\{x \in \mathcal{S}_i\right\}\\
  &\hspace{3cm}+v_K^T\left[T^*_Y(y), \alpha \right]\Bigg\}
 \end{align*} 
 where I used the hypothesis on the sufficient statistics $T_1,\ldots,T_K$ for the second equality.   Multiplying by an arbitrary measure $f_\text{mix} \in F_\text{mix}$ and collecting terms I obtain
 \begin{align*}
df_{X|Y}(x|y) &\propto   \exp \Bigg\{ \sum_{i=1}^K\left\langle M_i^T\left[T^*_Y(y), \alpha \right] + \eta_i, \mathbb{I}\left\{x \in \mathcal{S}_i\right\}T_i(x)\right\rangle\\
  &\hspace{2cm}+\sum_{i=1}^{K-1}\Big(v_i^T\left[T^*_Y(y), \alpha \right] - v_K^T\left[T^*_Y(y), \alpha \right] \\&\hspace{3.5cm}+ \log\pi_i - A_i(\eta_i) - \log\pi_K + A_K(\eta_K)\Big) \mathbb{I}\left\{x \in \mathcal{S}_i\right\}\rangle\Bigg\}dH_\text{mix}(x)\\
  &= \exp \left\{\langle \eta_\text{mix}^*, T_\text{mix}(x) \rangle\right\}dH_\text{mix}(x),
 \end{align*}
 where $\eta_\text{mix}^*$ is the updated parameter obtained by collecting terms, showing that the posterior is in the same exponential family as the prior.
\end{proof}

\section{VI Schemes for the LDpred model}
\label{sec:vi_ldpred}
Recall that the naive VI scheme introduces auxiliary variables to approximately model sparsity as in Equation~\ref{eq:ldpred_naive}.  The natural mean field approach would then be to approximate the posterior over $\beta_j$ as a Gaussian with mean $\mu_j$ and variance $s^2_j$, and approximate the posterior over $Z_j$ as a Bernoulli with probability of being zero $\psi_j$ to maintain conditional conjugacy.

Routine calculations then show that the coordinate ascent updates are
\begin{align*}
\psi_i &\leftarrow \frac{p_0 \exp \left\{-\frac{1}{2}\log\sigma_0^2 - \frac{1}{2\sigma_0^2}(\mu_i^2 + s_i^2)\right\}}{p_0 \exp \left\{-\frac{1}{2}\log\sigma_0^2 - \frac{1}{2\sigma_0^2}(\mu_i^2 + s_i^2)\right\} + (1-p_0) \exp \left\{-\frac{1}{2}\log\sigma_1^2 - \frac{1}{2\sigma_1^2}(\mu_i^2 + s_i^2)\right\}}\\
\mu_i &\leftarrow \frac{\hat{\beta}_i - \sum_{j\ne i} \mathbf{X}_{ij}\mu_j}{\psi_i \sigma_e^2/\sigma_0^2 + (1-\psi_i)\sigma_e^2/\sigma_1^2 + \mathbf{X}_{ii}}\\
s^2_i &\leftarrow \frac{1}{\psi_i / \sigma_0^2 + (1-\psi_i) / \sigma_1^2 + \mathbf{X}_{ii} / \sigma_e^2}.
\end{align*}

Using Theorems~\ref{theorem:exp_family}~and~\ref{theorem:conjugacy} it is possible to derive an alternative VI scheme that eschews the need for auxiliary variables.  In particular, the set of distributions containing only the point mass at 0 is trivially an exponential family, and the support of distributions in that family do not overlap with the set of Gaussians supported on $\mathbb{R}\setminus \left\{0\right\}$.  Therefore, the set of distributions that are mixtures of a Gaussian and a point mass at 0 are also an exponential family by Theorem 1.  Then, by Theorem 2, because a Gaussian prior on the mean of a Gaussian is conjugate, and the sufficient statistics of a Gaussian are constant on the set $\left\{0\right\}$, this mixture distribution is also a conjugate prior for the mean of a Gaussian.  The natural approximation to make for the variational posterior over $\beta_j$ would then lie in the same exponential family -- a mixture of a Gaussian with mean $\mu_j$ and variance $s^2_j$ and a point mass at $0$, with the probability of 0 being $\psi_j$.

Because the model is conjugate and the distributions are in the exponential family, the optimal updates for the natural parameters can be obtained from
\begin{align}
\label{eq:ldpredupdate}
q(\beta_i) \propto \exp \left\{ \mathbb{E}_{-i}\log P(\hat{\beta} | \beta) + \log P(\beta_i) \right\}
\end{align}
where $\mathbb{E}_{-i}[\cdot]$ is short hand for taking the expectation under the approximate posterior with respect to all variables except $\beta_i$.  The posterior mean under the variational approximation is $\mathbb{E}_q[\beta_i] = (1-\psi_i)\mu_i$, and so the first term expands to
\begin{align*}
\mathbb{E}_{-i}\log P(\hat{\beta} | \beta) &= -\frac{1}{2\sigma^2_e} \mathbb{E}_{-i}(\hat{\beta} - \mathbf{X}\beta)^T\mathbf{X}^{-1}(\hat{\beta} - \mathbf{X}\beta)\\
&= -\frac{1}{2\sigma_e^2} \mathbf{X}_{ii} \beta_i^2   + \frac{1}{\sigma_e^2}  \left(\hat{\beta}_i - \sum_{j \ne i} \mathbf{X}_{ij}(1-\psi_j)\mu_j\right) \beta_i + \text{const}\\
&=  -\frac{1}{2\sigma_e^2} \mathbf{X}_{ii}  \mathbb{I}\left\{\beta_i \ne 0\right\}\beta_i^2   + \frac{1}{\sigma_e^2}  \left(\hat{\beta}_i - \sum_{j \ne i} \mathbf{X}_{ij}(1-\psi_j),\mu_j\right) \mathbb{I}\left\{\beta_i \ne 0\right\} \beta_i + \text{const}.
\end{align*}
The natural parameters for a Gaussian with mean $\mu_i$ and variance $s^2_i$ are $-\frac{1}{2s_i^2}$ and $\frac{\mu_i}{s_i^2}$ with log normalizer $\frac{\mu_i^2}{2s_i^2} + \frac{1}{2}\log s_i^2$, with corresponding sufficient statistics $\beta_i^2$ and $\beta_i$.  By Theorem 1, the natural parameters for the mixture distribution are therefore $-\frac{1}{2s_i^2}$, $\frac{\mu_i}{s_i^2}$, and $\log \psi_i - \log (1-\psi_i) + \frac{\mu_i^2}{2s_i^2} + \frac{1}{2}\log s_i^2$, with corresponding sufficient statistics  $\mathbb{I}\left\{\beta_i \ne 0\right\} \beta_i^2$, $\mathbb{I}\left\{\beta_i \ne 0\right\} \beta_i$, and $\mathbb{I}\left\{\beta_i = 0\right\}$.  Matching the coefficients of the sufficient statistics in Equation~\ref{eq:ldpredupdate} and performing some algebra produces

\begin{align*}
\psi_i &\leftarrow 1 - \frac{1}{1 + \frac{p_0}{1-p_0}\sqrt{1 + \mathbf{X}_{ii} \sigma_1^2 / \sigma_e^2}  \exp\left\{ \frac{-\Big(\hat{\beta}_i - \sum_{j\ne i } \mathbf{X}_{ij} \mu_j (1 - \psi_j)\Big)^2}{2\sigma_e^4/ \sigma_1^2 + 2\sigma_e^2\mathbf{X}_{ii}}\right\}}\\
\mu_i &\leftarrow \frac{\hat{\beta}_i - \sum_{j\ne i } \mathbf{X}_{ij} \mu_j (1 - \psi_j)}{\sigma_e^2 / \sigma_1^2 + \mathbf{X}_{ii}}\\
s^2_i &\leftarrow \frac{1}{1/\sigma_1^2 + \mathbf{X}_{ii} / \sigma_e^2}.
\end{align*}

When fitting either VI scheme, I performed 100 iterations of coordinate ascent using the above update.  For the naive scheme, for coordinate $i$, I update $\mu_i$ and $s_i^2$ first, then update $\psi_i$ before moving on to coordinate $i+1$.  For initialization, $\mu_i = 0$ for all $i$, and $s_i^2 = \sigma_1^2 + \sigma_e^2$.  For the naive case, $\psi_i$ was initialized to be $1$, while for new scheme, $\psi_i$ was initialized to be $p_0$.

In both VI schemes, the rate-limiting step is clearly computing terms that involve summations of the type $\sum_{j\ne i}$, which take $O(P)$ time, where $P$ is the number of variables.  Since there are $O(P)$ variational parameters to update at each iteration, the runtime of each iteration is thus $O(P^2)$.

\section{VI Schemes for sparse probabilistic PCA}
\label{sec:vi_sppca}
First I derive the naive VI scheme.  For the auxiliary model,
\begin{align*}
Y_{pk} &\sim \text{Bernoulli}(1-p_0)\\
\mathbf{W}_{pk} | Y_{pk} &\sim \mathcal{N}(0, \sigma_{Y_{pk}}^2)\\
Z_n &\sim \mathcal{N}(0, \mathbf{I}_K)\\
X_n | Z_n, \mathbf{W} &\sim \mathcal{N}(\mathbf{W}Z_n, \sigma_e^2 \mathbf{I}_P)
\end{align*}
The natural mean field VI scheme for this model would be to assume that all variables are independent and assume that under the posterior $Y_{pk}$ is $\text{Bernoulli}$ with parameter $\psi_{pk}$, $\mathbf{W}_{pk}$ is Gaussian with mean $\mu_{W_{pk}}$ and variance $s^2_{W_{pk}}$, and $Z_n$ is multivariate normal with mean $\mu_{Z_n}$ and covariance matrix $\mathbf{S}_{Z_n}$. Below, I use the notation
\[
\mathbf{X} := \begin{pmatrix} \vert & & \vert \\ X_1 & \cdots & X_n\\ \vert & & \vert \end{pmatrix}.
\]
Routine calculations result in the following updates:
\begin{align*}
\mu_{Z_n} &\leftarrow \frac{1}{\sigma_e^2}\left(\frac{1}{\sigma_e^2}\mathbb{E}[\mathbf{W}^T\mathbf{W}] + \mathbf{I}_K\right)^{-1}\mathbb{E}[\mathbf{W}]^TX_n\\
\mathbf{S}_{Z_n} &\leftarrow \left(\frac{1}{\sigma_e^2}\mathbb{E}[\mathbf{W}^T\mathbf{W}] + \mathbf{I}_K\right)^{-1}\\
s^2_{W_{pk}} &\leftarrow \left[\frac{1}{\sigma_e^2}\left(\sum_n \mu_{Z_n,k}^2 + \mathbf{S}_{Z_n,kk}^2\right) + \frac{\psi_{pk}}{\sigma_0^2} + \frac{1 - \psi_{pk}}{\sigma_1^2}\right]^{-1}\\
\mu_{W_{pk}} &\leftarrow \frac{s^2_{W_{pk}}}{\sigma_e^2}\left[\left(\sum_n \mathbf{X}_{np}\mu_{{Z_n},k}\right) - \left(\sum_n\sum_{\ell\ne k} \mu_{W_{p\ell}}\mathbf{S}_{Z_n,k\ell}\right)\right]\\
\psi_{pk} &\leftarrow 1 - \frac{1}{1 + \frac{p_0}{1-p_0}\sqrt{\sigma_1^2/{\sigma_0^2}}\exp\left\{ \frac{1}{2}\left(\frac{1}{\sigma_0^2} - \frac{1}{\sigma_1^2}\right)(\mu_{W_{pk}}^2 + s^2_{W_{pk}})  \right\}}
\end{align*}
where
\[
\mathbb{E}[\mathbf{W}]_{pk} = \mu_{W_{pk}}
\]
and
\[
\mathbb{E}[\mathbf{W}^T\mathbf{W}]_{k\ell} = \sum_{p} \mu_{W_{pk}} \mu_{W_{p\ell}} + \delta_{k\ell} \sum_{p} s^2_{W_{pk}}.
\]

Now I derive a VI scheme using Theorems~\ref{theorem:exp_family}~and~\ref{theorem:conjugacy}.  The calculations are largely the same as in Appendix~\ref{sec:vi_ldpred} and so a number of details are omitted.  Because I am again replacing a Gaussian by a mixture of a Gaussian and point mass at zero, I assume the posterior for $\mathbf{W}_{pk}$ is a mixture of a point mass at zero and a Gaussian with mean $\mu_{W_pk}$, variance $s^2_{W_{pk}}$, and probability of being zero $\psi_{pk}$.  Working through the algebra as in the LDpred model results in:
\begin{align*}
\mu_{Z_n} &\leftarrow \frac{1}{\sigma_e^2}\left(\frac{1}{\sigma_e^2}\mathbb{E}[\mathbf{W}^T\mathbf{W}] + \mathbf{I}_K\right)^{-1}\mathbb{E}[\mathbf{W}]^TX_n\\
\mathbf{S}_{Z_n} &\leftarrow \left(\frac{1}{\sigma_e^2}\mathbb{E}[\mathbf{W}^T\mathbf{W}] + \mathbf{I}_K\right)^{-1}\\
s^2_{W_{pk}} &\leftarrow \left[\frac{1}{\sigma_e^2}\left(\sum_n \mu_{Z_n,k}^2 + \mathbf{S}_{Z_n,kk}^2\right) + \frac{1}{\sigma_1^2}\right]^{-1}\\
\mu_{W_{pk}} &\leftarrow \frac{s^2_{W_{pk}}}{\sigma_e^2}\left[\left(\sum_n \mathbf{X}_{np}\mu_{{Z_n},k}\right) - \left(\sum_n\sum_{\ell\ne k} \mu_{W_{p\ell}}(1-\psi_{p\ell})\mathbf{S}_{Z_n,k\ell}\right)\right]\\
\psi_{pk} &\leftarrow 1 - \frac{1}{1 + \frac{p_0}{1-p_0}\sqrt{\sigma_1^2 / s^2_{W_{pk}}}\exp\left\{-\frac{\mu_{W_{pk}}^2}{2s^2_{W_{pk}}}\right\}}
\end{align*}
where
\[
\mathbb{E}[\mathbf{W}]_{pk} = \mu_{W_{pk}} (1 - \psi_{pk})
\]
and
\[
\mathbb{E}[\mathbf{W}^T\mathbf{W}]_{k\ell} = \sum_{p} \mu_{W_{pk}}(1-\psi_{pk}) \mu_{W_{p\ell}}(1-\psi_{p\ell}) + \delta_{k\ell} \sum_{p} s^2_{W_{pk}} (1-\psi_{pk}).
\]

When fitting both VI schemes, I performed 250 iterations of coordinate ascent.  For the naive scheme, I first updated every coordinate of $Z$, then for each coordinate updated $Y_{pk}$ then $\mathbf{W}_{pk}$.  For the new scheme, I first updated $Z$ coordinate-wise then updated $\mathbf{W}$ coordinate-wise.  Using singular value decomposition to decompose $\mathbf{X} = \mathbf{U}\mathbf{\Sigma}\mathbf{V}^T$, I initialized $\mu_{Z_i} = \mathbf{U}_n$, $\mathbf{S}_{Z_n} = \mathbf{I}_2$ $\mu_{W_{pk}} = \mathbf{V}_{pk} \mathbf{\Sigma}_{kk}$, $s^2_{W_{pk}} = 1$ and $\psi_{pk} = 1\times10^{-10}$ for both schemes.

The updates for both models require the inversion of a $K\times K$ matrix which is $O(K^3)$ and computing $\mathbb{E}[\mathbf{W}^T\mathbf{W}]$ is $O(PK^2)$, but these can be precomputed before updating each $\mu_{Z_n}$ and $\mathbf{S}_{Z_n}$.  Then, updated each $\mu_{Z_n}$ requires $O(K^2 + PK)$ time. Therefore, updating all $\mu_{Z_n}$ and $\mathbf{S}_{Z_n}$ requires $O(NPK)$ time assuming that $K \ll N$ and $K \ll P$.  For fixed $\mu_{Z_n}$ and $\mathbf{S}_{Z_n}$, updating $\mu_{W_{pk}}$, $s^2_{W_{pk}}$, and $\psi_{pk}$ is limited by computing $\sum_n \sum_{\ell\ne k} \mu_{W_{p\ell}}\mathbf{S}_{Z_n, k\ell}$ or $\sum_n \sum_{\ell\ne k} \mu_{W_{p\ell}} (1 - \psi_{W_p\ell})\mathbf{S}_{Z_n, k\ell}$ which requires $O(NK)$ time.  Therefore updating all $\mu_{W_{pk}}$, $s^2_{W_{pk}}$, and $\psi_{pk}$ requires $O(NPK^2)$ time.  Therefore, each iteration of coordinate ascent requires $O(NPK^2)$ time.

\section{Additional PCA runs}
\label{sec:supp_ppca}
To ensure that the results presented in the main text are not unusual, I generated five additional datasets as described in the main text and compared the resulting PCA projections and sparsity of the loadings for traditional PCA (based on singular value decomposition), my naive implementation of sparse probabilistic PCA, and the implementation of sparse probabilistic PCA based on the non-overlapping mixtures trick (Figures~\ref{fig:supp_proj}~and~\ref{fig:supp_loadings}).  In all five realizations, the new formulation of sparse probabilistic PCA produces the sparsest loadings, and subjectively best separates the four clusters using the first two principle components.  As before, the naive implementation is indistinguishable from traditional PCA for small values of $\sigma_0^2$ or values of $\sigma_0^2$ close to $1$.

\begin{figure}
\centering
\includegraphics[width=0.8\textwidth]{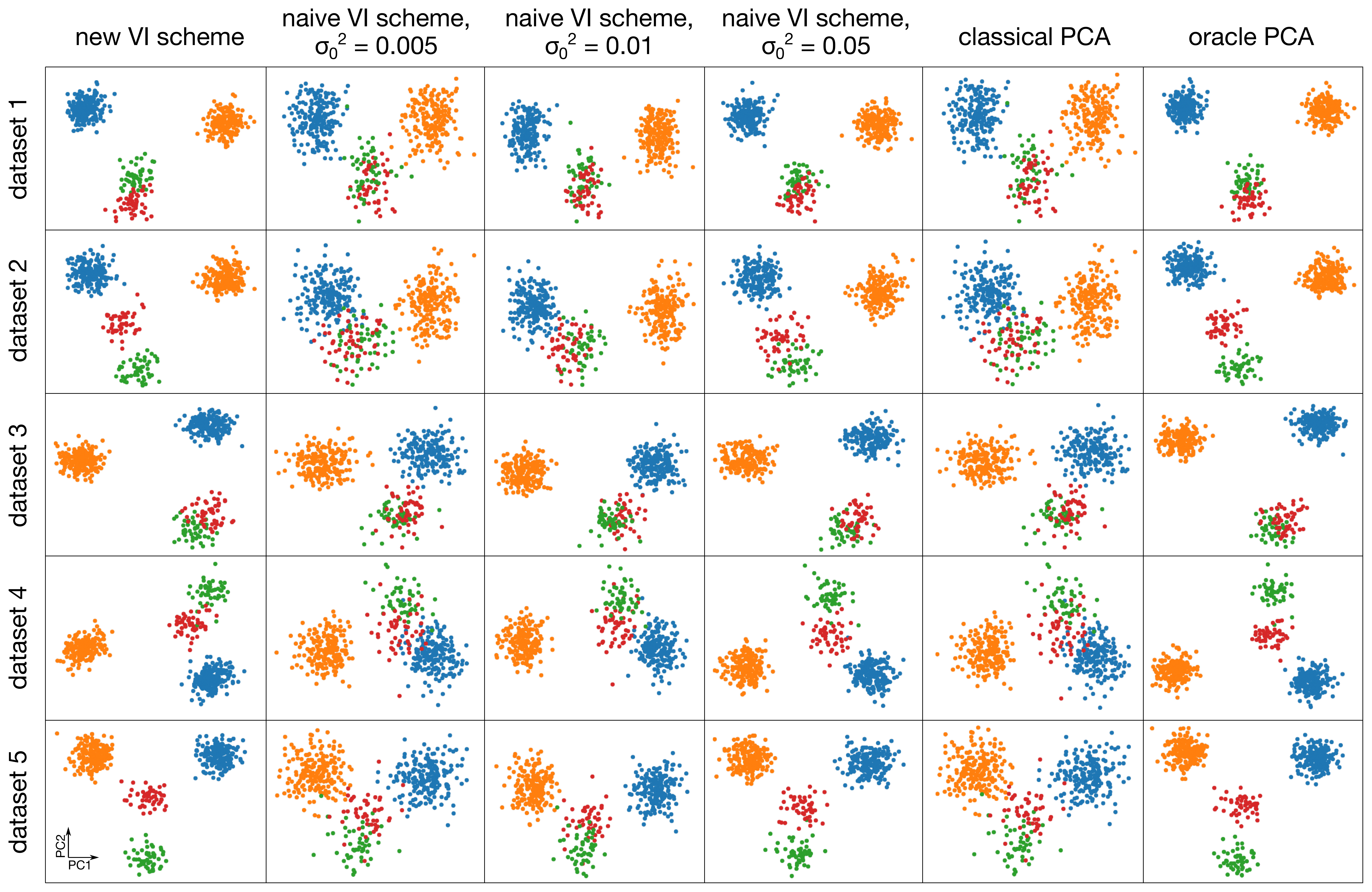}
\caption{\textbf{Projections onto the first two PCs for five replicate simulations}\\
Across all five replicate simulations the data cluster well in their projection onto the first two PCs for the new VI scheme.  While the naive scheme can cluster the data well if $\sigma^2_0$ is tuned properly, the clusters are often not as well-defined as under the new scheme.  Furthermore, the loadings are substantially less sparse as shown in Figure~\ref{fig:supp_loadings}.  In the limit of $\sigma_0^2 \downarrow 0$, it empirically appears that the naive scheme is indistinguishable from classical PCA.}
\label{fig:supp_proj}
\end{figure}
\begin{figure}
\centering
\includegraphics[width=0.6\textwidth]{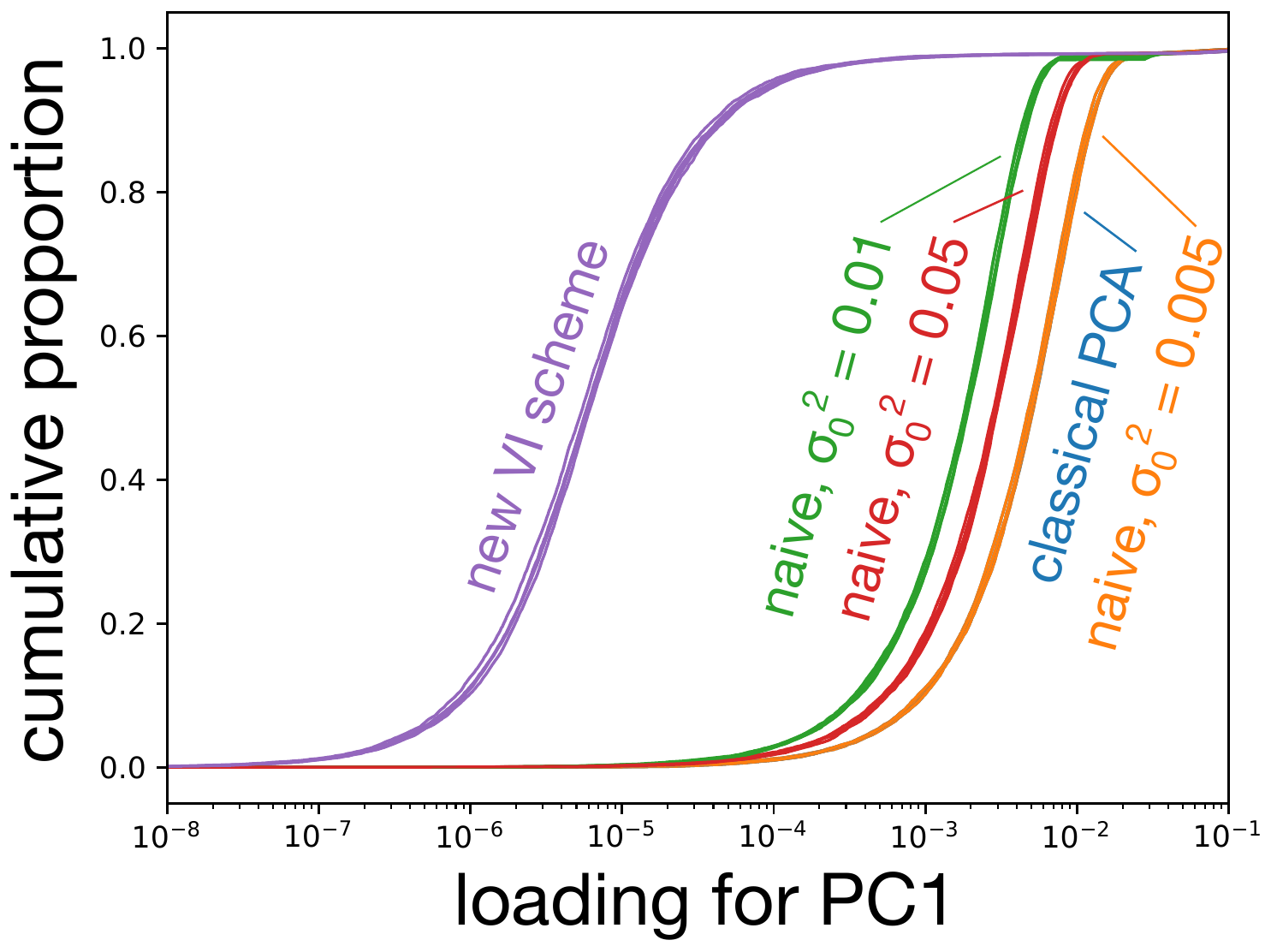}
\includegraphics[width=0.6\textwidth]{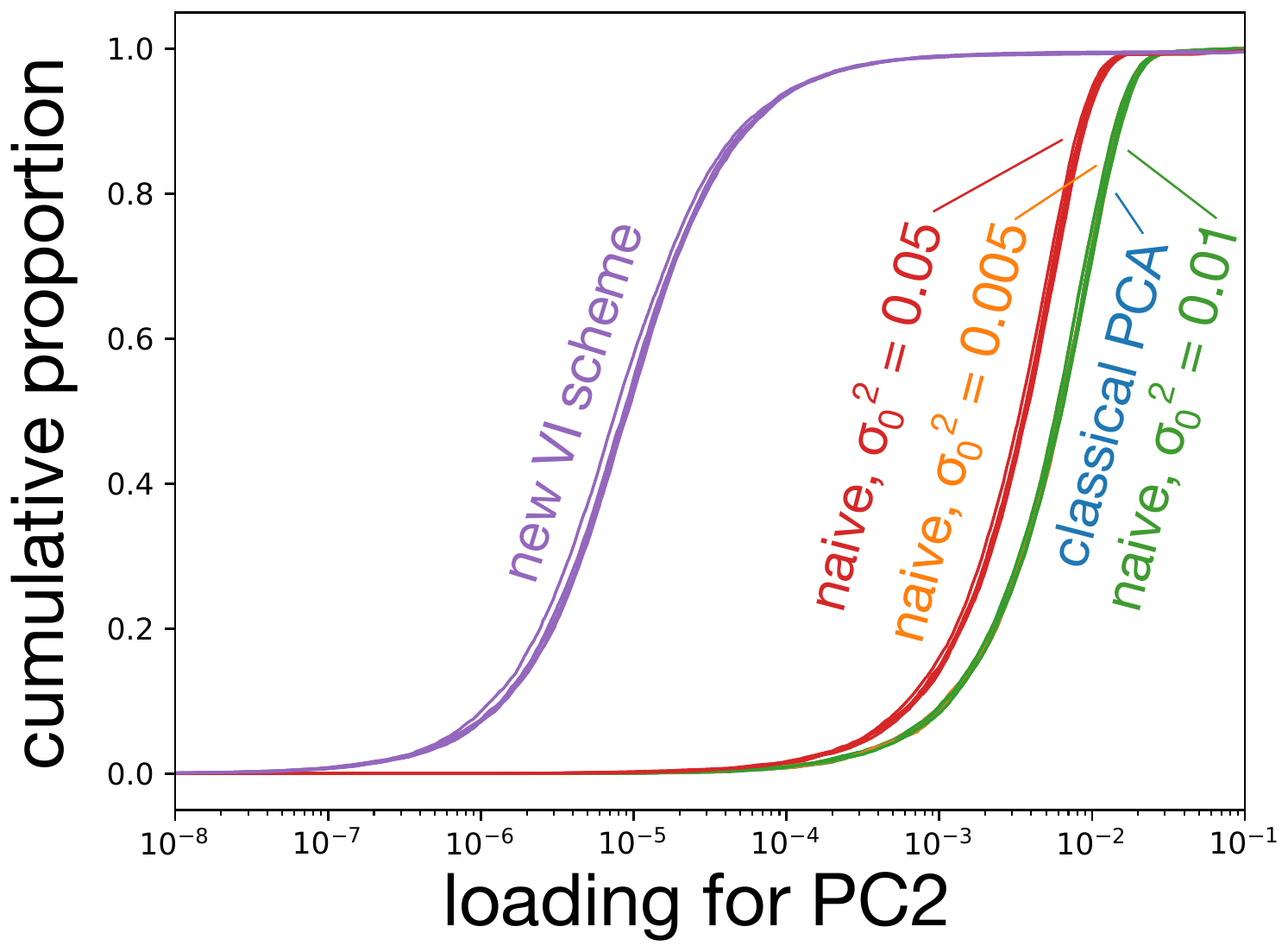}
\caption{\textbf{Distribution of loadings for five replicate simulations}\\
The new VI scheme produces significantly sparser loadings, with about 90\% of variables having an absolute loading below $1\times10^{-5}$ for both PC1 and PC2.  The naive scheme, while having a more skewed distribution of loadings than classical PCA can hardly be considered sparse with most variables having loadings greater than $1\times10^{-3}$ regardless of the precise value of $\sigma^2_0$ used.  Note that in both plots, the right-most cluster of curves is over-plotted: for PC1 the naive scheme with $\sigma^2_0=0.005$ is essentially indistinguishable from classical PCA for all five replicates.  For PC2, the naive scheme with $\sigma^2_0 = 0.005$ or $\sigma^2_0 = 0.01$ is indistinguishable from classical PCA in all but one replicate.}
\label{fig:supp_loadings}
\end{figure}

I also computed reconstruction error as a quantitative measure of performance.  I defined reconstruction error as the squared Frobenius norm between the reconstructed matrix ($\mathbb{E}[\mathbf{W}\mathbf{Z}]$ for the VI methods) and the signal in the simulated matrix -- that is, the matrix obtained by centering and scaling a matrix where each entry is the $\mu_c$ for that entry as defined above.  The mean reconstruction error across five simulations was 4261 (min: 4072, max: 4517) for the non-overlapping mixtures trick; 3985 (min: 3923, max: 4240) for oracle PCA; and 29407 (min: 28761, max: 30429) for classical PCA.  Across the naive schemes, taking $\sigma^2_0 = 0.05$ performed best with a mean reconstruction error of 7191 (min:7090, max: 7408).  Overall, the non-overlapping mixtures trick performed only slightly worse than PCA using knowledge of which variables were non-zero, whereas even the best naive scheme had almost double the reconstruction error, but all methods that attemted to account for sparsity outperformed classical PCA.

\end{document}